\newtheorem{theorem}{Theorem}[section]
\newtheorem{lemma}[theorem]{Lemma}
\theoremstyle{definition}
\theoremstyle{remark}
\newtheorem*{proof}{proof}
\newtheorem{remark}[theorem]{Remark}
\numberwithin{equation}{section}
\begin{document}
\title{\bf A new three-operator splitting method for the monotone inclusion problem\footnote{This work was supported by the National Natural Science Foundation of China under Grant
No. 12471290, and the Postgraduate Research \& Practice Innovation Program of Jiangsu Province
under Grant No. KYCX25\_1928.}}
\author{\normalsize Maoran Wang\thanks{School of Mathematical Sciences, Nanjing Normal University, Nanjing 210023, P.R. China. \tt{\color{blue}230901010@njnu.edu.cn}},\; Zijun Xia\thanks{School of Mathematical Sciences, Nanjing Normal University, Nanjing 210023, P.R. China. \tt{\color{blue}250901016@njnu.edu.cn}},\; Xingju Cai\thanks{School of Mathematical Sciences, Nanjing Normal University, Key Laboratory of NSLSCS (NNU), Ministry of Education, Nanjing 210023, P.R. China. \tt{\color{blue} caixingju@njnu.edu.cn}}
}
 \date{}
  \maketitle

 \begin{abstract}
This paper studies a class of monotone inclusion problems in a real Hilbert space involving the sum of three operators, where two are maximal monotone and the third is cocoercive. The Davis--Yin three-operator splitting method extends the two-operator splitting methods---namely the forward-backward method and the Douglas--Rachford method---to the three-operator setting. In addition, two other common splitting methods for two-operator problems are the reflected forward-backward and forward-reflected-backward methods. While several three-operator extensions exist for each of these methods individually, a unified framework that generalizes both remains absent. This raises the question: can they be extended to the three-operator case within a single algorithm? To address this, we propose a new splitting algorithm that unifies the Douglas--Rachford, reflected forward-backward, and forward-reflected-backward methods as special cases. We prove its weak convergence and establish its sublinear convergence rate for convex optimization problems under appropriate stepsize conditions. Finally, we present numerical experiments to validate the theoretical properties and demonstrate the effectiveness of the proposed method.

 \medskip
\noindent{\bf Key words}: monotone inclusion problem;  cocoercive operator; operator splitting algorithm; global convergence.
 \medskip

 \noindent\textbf{Mathematics Subject Classifications(2020)} 47H05, 65K05, 90C25, 65K10
\end{abstract}
\section{Introduction}
Let $\mathcal{H}$ be a real Hilbert space equipped with the inner product $\langle \cdot, \cdot \rangle$ and the induced norm $\|\cdot\|$. Let $A, C : \mathcal{H} \rightarrow 2^{\mathcal{H}}$ be maximal monotone operators, and let $B : \mathcal{H} \rightarrow \mathcal{H}$ be a $\beta$-cocoercive operator for some $\beta > 0$.  In this paper, we consider the following monotone inclusion problem:
\begin{equation}\label{cp}
\text{Find } x \in \mathcal{H} \text{ such that } 0 \in Ax + Bx + Cx.
\end{equation}
Throughout this paper, we assume that problem \eqref{cp} has at least one solution. To solve this problem numerically, Davis and Yin introduced the three-operator splitting method (TSM) \cite{three}. The iteration scheme of TSM is defined as follows:
\begin{equation}\label{0}
\begin{cases}
x^{k+1}=J_{\gamma C}(z^k),\vspace{1ex}\\
y^{k+1}=J_{\gamma A}(2x^{k+1}-z^k-\gamma Bx^{k+1}),\vspace{1ex}\\
z^{k+1}=z^k+y^{k+1}-x^{k+1}.
\end{cases}
\end{equation}
Equivalently, the iteration \eqref{0} may be reformulated as a Picard iteration:
\begin{equation*}\label{Davis-Yin}
z^{k+1} = Tz^k:=\left[Id - J_{\gamma C} + J_{\gamma A} \circ \left(2J_{\gamma C} - Id - \gamma B \circ J_{\gamma C}\right)\right]z^k.
\end{equation*}
In \cite{three}, they established that for stepsizes \( \gamma \in (0, 2\beta) \), the operator \( T \) is \( \frac{2\beta}{4\beta - \gamma} \)-averaged, and that \( J_{\gamma C} \left( {\rm Fix}(T) \right) = {\rm zer}(A + B + C) \). By leveraging the Krasnosel'ski\u{\i}–Mann { {iteration}}, they demonstrated the global convergence of the algorithm and derived a sublinear convergence rate. Moreover, several classical operator splitting algorithms arise as special cases of TSM. For instance, the proximal point method ($A = B = 0$) \cite{ppa}, the forward-backward splitting method ($A$ or $C = 0$) \cite{1opt,Beck2009,2017pjo}, the Douglas–Rachford splitting method ($B = 0$) \cite{DR56,Eckstein1992,Lions1979,DR11}, and the forward-Douglas--Rachford splitting method (where $C$ is the normal cone operator of a closed subspace) \cite{2015Forward}. For a detailed discussion of improvements to the relaxed TSM, the reader is referred to \cite{2022DY}.

As a commonly used method in the existing literature for solving the three-operator problem~\eqref{cp}, the three-operator splitting method~\eqref{0} has garnered considerable attention. Zhang and Chen~\cite{pdy} introduced a parameterized variant of TSM, extending the parameterized Douglas–Rachford splitting method~\cite{pdr}, to compute the minimum-norm solution of problem~\eqref{cp}. To improve computational efficiency, Pedregosa and Gidel~\cite{ady} developed an adaptive TSM based on the original algorithm, incorporating line search techniques for operator \(B\).Similar adaptive strategies have also been explored in the context of variational inequality problems, as reported in \cite{peng2026extrapolated,peng2023accelerated,ur2025approximate}. Inertial variants of TSM have also been extensively studied in~\cite{dong2022inertial} and~\cite{idy} (motivated by \cite{BOT}). Recently, the linear convergence rates of TSM have been analyzed in \cite{LDY} using scaled relative graphs. The preconditioned and stochastic variants of TSM have been further explored in~\cite{pddy,pd3o}. For a comprehensive overview of operator splitting algorithms, we refer the reader to~\cite{combettes2024geometry,re}.

It is worth noting that TSM~\eqref{0} reduces to the forward-backward splitting method when either \(A = 0\) or \(C = 0\), and to the Douglas--Rachford splitting method when \(B = 0\). In addition to these, two other common splitting methods for two-operator problems are the forward-reflected-backward (FRB)~\cite{frb} and reflected-forward-backward (RFB)~\cite{rfb} splitting methods. Recently, several three-operator splitting methods have been developed to extend these two methods to the three-operator setting. For instance, the backward-forward-reflected-backward splitting method~\cite{rieger2020backward} and the forward-reflected-Douglas-Rachford splitting method~\cite{ryu2020finding} generalize the FRB method, while the backward-reflected-forward-backward splitting method~\cite{rieger2020backward} reduces to the RFB method. However, each of these extensions is tailored to a specific two-operator method and fails to simultaneously generalize both FRB and RFB to the three-operator setting within a single framework. This naturally raises the following question: does there exist a three-operator splitting method that simultaneously unifies the Douglas--Rachford, FRB, and RFB methods as special cases? In this paper, we address this question by introducing a new three-operator splitting method, which consists of two subproblems: the first involves a forward-backward step, while the second involves a backward step. The proposed algorithm encompasses the Douglas--Rachford, reflected forward-backward, and forward-reflected-backward splitting methods as special cases. Specifically, by setting appropriate operators to zero, each of these classical methods can be recovered within a unified framework.

The structure of this paper is organized as follows. Section~\ref{s2} reviews fundamental definitions and presents preliminary results that are essential for the subsequent analysis. In Section~\ref{s3}, we introduce the main contributions of the paper, namely, the proposed three-operator splitting method, along with its convergence analysis and sublinear convergence rate for convex optimization problems. Section~\ref{s4} reports a numerical experiment that illustrates and compares the performance of the proposed method and the Davis--Yin three-operator splitting method. Finally, Section~\ref{s5} provides concluding remarks and outlines potential directions for future research.
\section{Notations and  preliminary results}\label{s2}
Throughout this paper, $\mathcal{H}$ is a real Hilbert space with the inner product $\langle \cdot,\cdot\rangle$ and associated norm $\|\cdot\|$. The symbols ``$\to$'' and ``$\rightharpoonup$'' denote strong and weak convergence, respectively. For every $x$, $y$, $z$, and $w$ in $\mathcal{H}$, the following equation holds:
\begin{equation}\label{fp}
2\langle x - y, z - w \rangle = \|x - w\|^2 + \|y - z\|^2 - \|x - z\|^2 - \|y - w\|^2.
\end{equation}
We call the following bound Young’s inequality:
$$\langle a, b \rangle \leq \frac{\varepsilon}{2} \|a\|^2 + \frac{1}{2\varepsilon} \|b\|^2,~ \forall a, b \in \mathcal{H},~ \forall \varepsilon > 0.$$
Let $A : \mathcal{H} \to 2^{\mathcal{H}}$ be a set-valued operator. The domain, range, graph, and zero point set of $A$ are denoted by dom$A = \{x \in \mathcal{H} \mid Ax \neq \emptyset \}$, ran$A = \bigcup_{x \in \mathrm{dom} A} Ax$, gra$A = \{(x, y) \in \mathcal{H} \times \mathcal{H} \mid y \in Ax, x \in \mathrm{dom} A\}$, and zer$A = \{x \in \mathcal{H} \mid 0 \in Ax\}$, respectively. The inverse of $A$ is $A^{-1}: y \mapsto \{x \mid y \in Ax\}$. ``Id'' represents the identity operator. The operator $A$ is monotone if and only if $\langle y - w, x - z \rangle \geq 0$ for all $(x, y), (z, w) \in \mathrm{gra} A$. The operator $A$ is uniformly monotone with modulus $\phi_A: [0,+\infty)\to [0,+\infty]$ if $\phi$ is increasing, vanishes only at 0, and $\langle y - w, x - z \rangle \geq \phi_A(\|y-x\|)$ for all $(x, y), (z, w) \in \mathrm{gra} A$. The operator $A$ is maximal monotone if it is monotone, and there exists no monotone operator $A' : \mathcal{H} \to 2^\mathcal{H}$ such that gra$A'$ properly contains gra$A$. The resolvent of the maximal monotone operator $A$ is $J_A = ({\rm Id} + A)^{-1}$, then it follows that dom$J_A = \mathcal{H}$ \cite[Theorem 21.1]{Combettes2017Convex} and $J_A$ is firmly nonexpansive (FNE) \cite[Proposition 23.8]{Combettes2017Convex}, i.e.,
$$ \langle x - y, J_Ax - J_Ay \rangle \geq \|J_Ax - J_Ay\|^2,~ \forall x, y \in \mathcal{H}. $$
Using the definition of the resolvent, we can directly obtain that
$$ y = J_Ax \Leftrightarrow x - y \in Ay. $$
The operator $B$ is $\beta$-cocoercive ($\beta > 0$) if $\beta B$ is FNE, i.e.,
$$ \langle x - y, Bx - By \rangle \geq \beta \|Bx - By\|^2,~ \forall x, y \in \mathcal{H}.$$
A mapping $C$ is demiregular at $x\in\operatorname{dom} C$ if for all $u\in Cx$ and all sequences $(x^k, u^k)\in\operatorname{gra} C$ satisfying $x^k \rightharpoonup x$ and $u^k\to u$, we have $x^k\to x$. 
We define \( \Gamma_0(\mathcal{H}) \) as the set of proper, lower semicontinuous, convex functions \( f: \mathcal{H} \to (-\infty, \infty] \). Let \( f \in \Gamma_0(\mathcal{H}) \); then the subdifferential of \( f \), denoted by \( \partial f \), is defined as
\[
\partial f : x \mapsto \{ u \in \mathcal{H} \mid f(y) \geq f(x) + \langle u, y - x \rangle, \, \forall y \in \mathcal{H} \}.
\]
The subdifferential \( \partial f \) is maximal monotone \cite[Proposition 20.25]{Combettes2017Convex}. The set of minimizers of \( f \), denoted by \( \arg\min_{x \in \mathcal{H}} f(x) \), is the zero-point set of \( \partial f \). The resolvent of \( \partial f \) is also known as the proximal operator of \( f \), denoted by
\[
\text{prox}_f : x \mapsto \arg\min_{y \in \mathcal{H}} \left\{ f(y) + \frac{1}{2} \| y - x \|^2 \right\}.
\]
In cases where \( \nabla f \) exists, the Baillon--Haddad theorem states that \( \nabla f \) is \( L \)-Lipschitz continuous if and only if \( \nabla f \) is \( \frac{1}{L} \)-cocoercive. For a nonempty convex set \( C \), let \( \text{sri}(C) \) denote the strong relative interior of \( C \). When \( C \) is a nonempty closed convex set, we denote the indicator function of \( C \) by \( I_C \in \Gamma_0(\mathcal{H}) \), and the normal cone of \( C \) by \( \mathcal{N}_C = \partial I_C \). Furthermore, using the definition of the resolvent of \( \mathcal{N}_C \), we have \( J_{\mathcal{N}_C} = P_C \), where \( P_C \) represents the projection operator onto the closed convex set \( C \).
The following lemmas are essential for designing and analyzing our algorithm.
\begin{lemma}{\rm{\cite[Lemma 2.47]{Combettes2017Convex}}}\label{l1}
Let $\{c_n\}_{n=1}^\infty$ be a sequence in real Hilbert space $\mathcal{H}$ and let $C$ be a nonempty subset of $\mathcal{H}$. Suppose that for any given $c\in C$, $\left\{\|c_n-c\|\right\}_{n=1}^\infty$ converges and that every weak sequential cluster point of $\{c_n\}_{n=1}^\infty$ belongs to $C$. Then $\{c_n\}_{n=1}^\infty$ converges weakly to a point in $C$.
\end{lemma}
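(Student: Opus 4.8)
The plan is to prove Opial's Lemma from first principles, using only that $\mathcal{H}$ is reflexive (being a Hilbert space) together with the elementary identity \eqref{fp}. First I would note that, since $C \neq \emptyset$, we may fix some $c \in C$; by hypothesis $\{\|c_n - c\|\}$ converges, hence is bounded, so $\{c_n\}$ is a bounded sequence in $\mathcal{H}$. Consequently $\{c_n\}$ possesses at least one weak sequential cluster point, so the set $W$ of all weak sequential cluster points of $\{c_n\}$ is nonempty; by the second hypothesis, $W \subseteq C$.

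The crux is to show that $W$ is a singleton. Given $x, y \in W$, choose subsequences with $c_{n_k} \rightharpoonup x$ and $c_{m_j} \rightharpoonup y$. Expanding squared norms,
\[
\|c_n - x\|^2 - \|c_n - y\|^2 = 2\langle c_n,\, y - x\rangle + \|x\|^2 - \|y\|^2 ,
\]
where the left-hand side converges because both $x, y \in C$ and the first hypothesis applies to each of them. Hence the real sequence $\langle c_n,\, y - x\rangle$ converges; evaluating its limit along the two subsequences yields $\langle x,\, y - x\rangle = \langle y,\, y - x\rangle$, that is $\|x - y\|^2 = 0$, so $x = y$. Therefore $W = \{\bar c\}$ for some $\bar c \in C$.

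Finally I would invoke the standard fact that a bounded sequence in $\mathcal{H}$ whose set of weak sequential cluster points is a single point $\bar c$ must converge weakly to $\bar c$: otherwise some weak neighbourhood of $\bar c$ would be avoided infinitely often, and that (bounded) subsequence would itself possess a weak sequential cluster point distinct from $\bar c$, contradicting $W = \{\bar c\}$. Thus $c_n \rightharpoonup \bar c \in C$, which is the assertion.

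The step demanding the most care is the uniqueness argument: one must first establish convergence of the full scalar sequence $\langle c_n,\, y - x\rangle$ — not merely of its subsequences — and only then identify its limit along $\{c_{n_k}\}$ and $\{c_{m_j}\}$; the boundedness and weak-compactness bookkeeping at the start and end is routine.
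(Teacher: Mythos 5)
Your proof is correct and is essentially the standard argument behind \cite[Lemma 2.47]{Combettes2017Convex}, which the paper cites without reproducing a proof: boundedness of $\{c_n\}$ yields a nonempty set $W$ of weak sequential cluster points contained in $C$; the identity $\|c_n-x\|^2-\|c_n-y\|^2 = 2\langle c_n,\,y-x\rangle+\|x\|^2-\|y\|^2$ forces the full scalar sequence $\langle c_n,\,y-x\rangle$ to converge, and evaluating its limit along the two subsequences gives $\|x-y\|^2=0$, so $W$ is a singleton; and a bounded sequence with a unique weak sequential cluster point converges weakly to it. You correctly isolate the one genuinely delicate step (convergence of the whole sequence $\langle c_n,\,y-x\rangle$ before passing to subsequences), so there is nothing to object to.
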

 \begin{lemma}{\rm{\cite[Proposition 20.37]{Combettes2017Convex}}}\label{l0}
 Let $T$ be a maximal monotone operator from a real Hilbert space $\mathcal{J}$ to $2^\mathcal{J}$, let $(x_b,u_b)_{b\in B}$ be a bounded net in {\rm{gra}}T, and let $(x,u)\in\mathcal{J}\times\mathcal{J}$. If $x_b\to x$ and $u_b\rightharpoonup u$, then $(x,u)\in$ {\rm{gra}}T.
 \end{lemma}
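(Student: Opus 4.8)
The plan is to exploit the defining property of maximality. Since $T$ is monotone, if the pair $(x,u)$ satisfies $\langle x-y,\,u-v\rangle\ge 0$ for every $(y,v)\in\operatorname{gra}T$, then $\operatorname{gra}T\cup\{(x,u)\}$ is still the graph of a monotone operator, so by maximality it cannot properly contain $\operatorname{gra}T$; hence $(x,u)\in\operatorname{gra}T$. It therefore suffices to prove $\langle x-y,\,u-v\rangle\ge 0$ for an arbitrary fixed $(y,v)\in\operatorname{gra}T$.

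To obtain this inequality I would start from the monotonicity of $T$ at $(x_b,u_b)$ and $(y,v)$, namely $\langle x_b-y,\,u_b-v\rangle\ge 0$ for all $b\in B$, and then pass to the limit along the net. Decomposing
$$\langle x_b-y,\,u_b-v\rangle=\langle x_b-x,\,u_b\rangle+\langle x-y,\,u_b\rangle-\langle x_b-y,\,v\rangle,$$
I would handle the three summands in turn: $\langle x-y,\,u_b\rangle\to\langle x-y,\,u\rangle$ because $u_b\rightharpoonup u$; $\langle x_b-y,\,v\rangle\to\langle x-y,\,v\rangle$ because $x_b\to x$ strongly; and $\lvert\langle x_b-x,\,u_b\rangle\rvert\le\|x_b-x\|\,\sup_{b\in B}\|u_b\|\to 0$ by the strong convergence $x_b\to x$ together with the boundedness of the net $(u_b)_{b\in B}$. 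Passing to the limit in the inequality yields $\langle x-y,\,u-v\rangle\ge 0$, and the maximality argument of the previous paragraph then gives $(x,u)\in\operatorname{gra}T$.

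The one delicate point is the cross term $\langle x_b-x,\,u_b\rangle$: strong convergence of $(x_b)$ alone does not control it, and a weakly convergent net---unlike a weakly convergent sequence---need not be bounded, so the hypothesis that $(x_b,u_b)_{b\in B}$ is a \emph{bounded} net is precisely what is used here. It is also worth noting that the limits above are taken along the net itself (each of the three scalar nets converges, hence so does their sum), not along a subsequence, which in general need not exist for a net.
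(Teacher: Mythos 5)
Your proof is correct and is essentially the standard argument for this result (the paper itself gives no proof, citing \cite[Proposition 20.37]{Combettes2017Convex}, whose proof proceeds exactly this way: pass to the limit in the monotonicity inequality and invoke maximality). You also correctly identify the one genuinely delicate point, namely that the boundedness hypothesis on the net is what controls the cross term $\langle x_b-x,\,u_b\rangle$, since a weakly convergent net, unlike a sequence, need not be bounded.
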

 \begin{lemma}{\rm{\cite[Corollary 25.5]{Combettes2017Convex}}}\label{l00}
 Let $F$ and $G$ be maximal monotone operators from a real Hilbert space $\mathcal{J}$ to $2^\mathcal{J}$ such that ${\rm{dom}} G=\mathcal{J}$. Then $F+G$ is maximal monotone.
 \end{lemma}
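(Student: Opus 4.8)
The plan is to prove maximal monotonicity of $F+G$ through Minty's theorem, which asserts that a monotone operator $M$ on $\mathcal{H}$ is maximal if and only if $\mathrm{ran}(Id+M)=\mathcal{H}$. Monotonicity of $F+G$ is immediate, so the whole task reduces to showing that for every $z\in\mathcal{H}$ the inclusion $z\in x+Fx+Gx$ has a solution.

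Because $G$ may be set-valued, I would first solve a regularized problem obtained by replacing $G$ with its Yosida approximation $G_\mu=\mu^{-1}(Id-J_{\mu G})$, which is single-valued, monotone, $\mu$-cocoercive, $\mu^{-1}$-Lipschitz, everywhere defined, and satisfies $G_\mu x\in G(J_{\mu G}x)$. Then $G_\mu+Id-z$ is strongly monotone and Lipschitz, so for all sufficiently small $\gamma>0$ the forward–backward map $x\mapsto J_{\gamma F}\!\big(x-\gamma(G_\mu x+x-z)\big)$ is a contraction, and Banach's fixed-point theorem produces the unique $x_\mu$ with $z\in x_\mu+Fx_\mu+G_\mu x_\mu$. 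Put $a_\mu:=z-x_\mu-G_\mu x_\mu\in Fx_\mu$ and $w_\mu:=J_{\mu G}x_\mu$, so that $G_\mu x_\mu\in Gw_\mu$, $x_\mu-w_\mu=\mu\,G_\mu x_\mu$, and the key identity $a_\mu+G_\mu x_\mu=z-x_\mu$ holds.

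Next I would derive uniform bounds as $\mu\downarrow0$. Fixing $x^\star\in\mathrm{dom}\,F$ with $s^\star\in Fx^\star$ and, using $\mathrm{dom}\,G=\mathcal{H}$, some $v^\star\in Gx^\star$, one adds the monotonicity inequalities $\langle a_\mu-s^\star,\,x_\mu-x^\star\rangle\ge0$ and $\langle G_\mu x_\mu-v^\star,\,w_\mu-x^\star\rangle\ge0$, substitutes $a_\mu+G_\mu x_\mu=z-x_\mu$, and applies Young's inequality; this routine manipulation yields a bound on $\|x_\mu-x^\star\|$ and on $\mu\|G_\mu x_\mu\|^2$ that is uniform for $\mu\in(0,1]$. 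Hence $\{x_\mu\}$ and $\{w_\mu\}$ are bounded, $\|x_\mu-w_\mu\|=\mu\|G_\mu x_\mu\|\to0$, and, since a maximal monotone operator with full domain maps bounded sets to bounded sets, $\{G_\mu x_\mu\}$ and therefore $\{a_\mu\}=\{z-x_\mu-G_\mu x_\mu\}$ are bounded as well.

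Finally I would pass to the limit. Choose $\mu_n\downarrow0$ with $x_{\mu_n}\rightharpoonup\bar x$, $a_{\mu_n}\rightharpoonup\bar a$ and $G_{\mu_n}x_{\mu_n}\rightharpoonup\bar v$; then $w_{\mu_n}\rightharpoonup\bar x$ and $\bar a+\bar v=z-\bar x$. Working with the maximal monotone operator $F\times G$ on $\mathcal{H}\times\mathcal{H}$, one has $\big((x_{\mu_n},w_{\mu_n}),(a_{\mu_n},G_{\mu_n}x_{\mu_n})\big)\in\mathrm{gra}(F\times G)$, and a short computation using $a_\mu+G_\mu x_\mu=z-x_\mu$ and $x_\mu-w_\mu=\mu\,G_\mu x_\mu$ gives $\langle x_{\mu_n},a_{\mu_n}\rangle+\langle w_{\mu_n},G_{\mu_n}x_{\mu_n}\rangle=\langle z,x_{\mu_n}\rangle-\|x_{\mu_n}\|^2-\mu_n\|G_{\mu_n}x_{\mu_n}\|^2$, whose limit superior is at most $\langle z,\bar x\rangle-\|\bar x\|^2=\langle\bar x,\bar a\rangle+\langle\bar x,\bar v\rangle$ by weak lower semicontinuity of the norm (the last subtracted term being nonnegative). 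Inserting this estimate for the cross inner products into the monotonicity of $F\times G$ tested against an arbitrary element of its graph — exactly the argument underlying Lemma \ref{l0}, but needing only weak convergence because the inner-product bound is now available — yields $\bar a\in F\bar x$ and $\bar v\in G\bar x$, so $z=\bar x+\bar a+\bar v\in\bar x+F\bar x+G\bar x$; by Minty's theorem, $F+G$ is maximal monotone. The main obstacle is precisely this limiting step: the set-valuedness of $G$ forces the Yosida regularization and the use of boundedness of $G$ on bounded sets, and it is the identity $a_\mu+G_\mu x_\mu=z-x_\mu$ that makes the crucial limit-superior inequality for the cross inner products available; by comparison, the Minty reduction, the solvability of the regularized inclusion, and the a priori bound are routine.
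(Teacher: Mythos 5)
Your overall strategy --- reduce to Minty's theorem, regularize $G$ by its Yosida approximation $G_\mu$, obtain uniform estimates, and pass to the limit with the $\limsup$ inner-product form of demiclosedness --- is exactly the classical Rockafellar/Br\'ezis--Crandall--Pazy argument that underlies the cited Corollary 25.5, and almost every step checks out: the contraction argument for the regularized inclusion, the a priori bound on $\|x_\mu-x^\star\|$ and $\mu\|G_\mu x_\mu\|^2$, the identity $\langle x_\mu,a_\mu\rangle+\langle w_\mu,G_\mu x_\mu\rangle=\langle z,x_\mu\rangle-\|x_\mu\|^2-\mu\|G_\mu x_\mu\|^2$, and the weak-lower-semicontinuity estimate that lets you conclude $(\bar x,\bar a)\in\mathrm{gra}\,F$ and $(\bar x,\bar v)\in\mathrm{gra}\,G$ are all correct.

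There is, however, one genuine gap, and it sits at the single most delicate point of the proof: your justification that $\{G_\mu x_\mu\}$ is bounded. The claim ``a maximal monotone operator with full domain maps bounded sets to bounded sets'' is \emph{false} in infinite-dimensional Hilbert spaces. For a counterexample, take $f(x)=\sum_{n}c_n\bigl(|x_n|-\tfrac12\bigr)_+^2$ on $\ell^2$ with $c_n\to\infty$: this $f$ is convex, finite and hence continuous everywhere, so $\partial f$ is maximal monotone with full domain, yet the subgradient inequality $f(0)\ge f(e_n)-\langle u_n,e_n\rangle$ forces $\|u_n\|\ge c_n/4\to\infty$ for $u_n\in\partial f(e_n)$; thus $\partial f$ is unbounded on the unit sphere. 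What \emph{is} true, and what the classical proof uses, is that a monotone operator is locally bounded at every interior point of its domain; since $\mathrm{dom}\,G=\mathcal{H}$, pick $\rho,M>0$ with $\sup\{\|v\|:v\in Gu,\ \|u-x^\star\|\le\rho\}\le M$, and test monotonicity of $G$ between $(w_\mu,G_\mu x_\mu)$ and $(x^\star+\rho e,v)$ for an arbitrary unit vector $e$. This gives $\rho\langle G_\mu x_\mu,e\rangle\le\langle G_\mu x_\mu,w_\mu-x^\star\rangle+M(\|w_\mu-x^\star\|+\rho)$, and since $\langle G_\mu x_\mu,w_\mu-x^\star\rangle=\langle z-x_\mu-a_\mu,x_\mu-x^\star\rangle-\mu\|G_\mu x_\mu\|^2+\mu\langle G_\mu x_\mu,Bx^\star$-type terms$\rangle$ is controlled from above by your earlier bounds together with $\langle a_\mu-s^\star,x_\mu-x^\star\rangle\ge0$, taking the supremum over $\|e\|=1$ yields $\|G_\mu x_\mu\|=O(1)$. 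With that repair the remainder of your limit passage goes through unchanged, so the proof is salvageable, but as written the boundedness step rests on a false statement and must be replaced by this local-boundedness-plus-monotonicity argument.
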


For more information on monotone operators, functional analysis, and convex analysis, readers are referred to the works by \cite{Combettes2017Convex,2007Functional,1955Functional,1970ca}.
\section{Algorithm and main theoretical results}\label{s3}
Algorithm \ref{AA} presents a comprehensive formulation of our proposed splitting algorithm for the three-operator problem \eqref{cp}.
\begin{algorithm}[H]
\caption{A new three-operator splitting method for solving \eqref{cp}.}
\label{AA}
\begin{algorithmic}[1]
{\STATE {\pmb{Initialization}: $k=0$; initial point $z^0,y^0 \in\mathcal{H}$; relaxed parameter $\lambda\in(0,2)$;  stepsize $\gamma\in \left(0,\frac{2\beta\lambda(2-\lambda)}{8-3\lambda}\right)$; { {$r_0=1$}} and the error tolerance $\varepsilon\in(0,1)$.}}
{\STATE{\pmb{While}~$r_k>\varepsilon$}}
{\STATE{\pmb{Compute:}}}
\begin{align}
 x^{k+1}& = J_{\gamma C}(z^k - \gamma B y^k),\label{r1} \vspace{1ex}\\
  y^{k+1}& = J_{\gamma A}(2x^{k+1} - z^k),\label{r2} \vspace{1ex}\\
  {z}^{k+1}&={ z}^{k}+\lambda({ y}^{k+1}-{ x}^{k+1}).\label{r3}
\end{align}
\STATE{\pmb{Do}  $r_{k+1}= \frac{\|{ y}^k-{ x}^k\|^2}{\|{ x}^k\|^2+1}$ and $k \leftarrow k+1$.}
\STATE{\pmb{End while}}
\end{algorithmic}
\end{algorithm}

\begin{remark}\label{re3.1}
1. If $B = 0$ and $\lambda=1$, then Algorithm~\ref{AA} reduces to the  Douglas–Rachford splitting method.

2.  For the three-operator splitting method \eqref{0}, setting either \(A = 0\) or \(C = 0\) yields the standard forward-backward splitting algorithm. In contrast, Algorithm~\ref{AA} exhibits different limiting behavior. Specifically, when \(A = 0\) and \(\lambda = 1\), we have \(J_{\gamma A} = \mathrm{Id}\), and Algorithm~\ref{AA} reduces to
\begin{equation*}
\begin{cases}
x^{k+1} = J_{\gamma C}(x^k - \gamma B y^k), \\[1ex]
y^{k+1} = 2x^{k+1} - x^k.
\end{cases}
\end{equation*}
This is exactly the reflected forward–backward (RFB) splitting method proposed in~\cite{rfb}. On the other hand, when \(C = 0\) and \(\lambda = 1\), Algorithm~\ref{AA} simplifies to
\begin{equation*}
y^{k+1} = J_{\gamma A} \bigl[ y^k - \gamma B y^k - \gamma (B y^k - B y^{k-1}) \bigr],
\end{equation*}
which coincides with the forward–reflected–backward (FRB) splitting method introduced in~\cite{frb}. Notably, both RFB and FRB admit stepsize bounds that are smaller than those of the forward-backward method. Consequently, it is natural that the stepsize bound of our Algorithm~\ref{AA} is smaller than that of the Davis–Yin splitting method, as our algorithm encompasses both RFB and FRB as special cases and inherits their stepsize restrictions.
\end{remark}
\subsection{Convergence analysis}
Before presenting the convergence results of Algorithm~\ref{AA}, we establish the following lemma. We begin by providing an equivalent characterization of the solution to problem~\eqref{cp}.
\begin{lemma}\label{kkt}
For any $\gamma > 0$, we define the set $\mathcal{U}$ by
\[
\mathcal{U} = \left\{ (z, x) \in \mathcal{H}^2:=\mathcal{H} \times \mathcal{H} \;\middle|\; x = J_{\gamma C}(z - \gamma Bx) = J_{\gamma A}(2x - z) \right\}.
\]
The following statements are valid.

{\rm 1.} If $x \in \mathrm{zer}(A + B + C)$, then there exists some $z \in \mathcal{H}$ such that $(z, x) \in \mathcal{U}$.

{\rm 2.} If $(z, x) \in \mathcal{U}$, then $x \in \mathrm{zer}(A + B + C)$. Consequently, $\mathcal{U} \neq \emptyset \;\Leftrightarrow\; \mathrm{zer}(A + B + C) \neq \emptyset$.
\end{lemma}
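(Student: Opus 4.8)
The plan is to prove the two implications by translating the resolvent identities in the definition of $\mathcal{U}$ into membership statements for the operators $A$, $B$, $C$, using the elementary equivalence $y = J_{\gamma D}(u) \Leftrightarrow u - y \in \gamma D y$ recalled in Section~\ref{s2}.

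For statement~1, suppose $x \in \mathrm{zer}(A+B+C)$, so there exist $a \in Ax$ and $c \in Cx$ with $a + Bx + c = 0$. I would then \emph{define} $z := x + \gamma c$, so that $z - \gamma Bx - x = \gamma c - \gamma Bx = \gamma(-a - Bx - c + c)\ldots$ — more directly, $z - \gamma Bx - x = \gamma c - \gamma Bx$; I want this to lie in $\gamma C x$, i.e.\ I want $c - Bx \in Cx$, which is not automatic. So instead the correct choice is to pick $z$ so that both resolvent equations hold simultaneously. Observe that $x = J_{\gamma A}(2x - z)$ is equivalent to $(2x - z) - x = x - z \in \gamma Ax$, i.e.\ $z - x \in -\gamma Ax$; and $x = J_{\gamma C}(z - \gamma Bx)$ is equivalent to $(z - \gamma Bx) - x \in \gamma Cx$, i.e.\ $z - x \in \gamma(Bx + Cx)$. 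Hence $(z,x) \in \mathcal{U}$ iff there is a single $z$ with $z - x \in -\gamma Ax$ and $z - x \in \gamma Bx + \gamma Cx$. Given the decomposition $0 = a + Bx + c$ with $a \in Ax$, $c \in Cx$, set $z := x - \gamma a$. Then $z - x = -\gamma a \in -\gamma Ax$, and also $z - x = -\gamma a = \gamma(Bx + c) \in \gamma Bx + \gamma Cx$ since $c \in Cx$. Both characterizations hold, so $(z,x) \in \mathcal{U}$.

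For statement~2, suppose $(z,x) \in \mathcal{U}$. Reading the two resolvent identities backwards as above, $x = J_{\gamma C}(z - \gamma Bx)$ gives $z - x - \gamma Bx \in \gamma Cx$, and $x = J_{\gamma A}(2x - z)$ gives $x - z \in \gamma Ax$. Adding these two inclusions (both being subsets of $\mathcal{H}$, and using that $\gamma A x + \gamma C x \subseteq \gamma(Ax + Cx)$ pointwise) yields $(x - z) + (z - x - \gamma Bx) = -\gamma Bx \in \gamma Ax + \gamma Cx$, hence $0 \in \gamma(Ax + Bx + Cx)$, so $x \in \mathrm{zer}(A+B+C)$. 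The final equivalence $\mathcal{U} \neq \emptyset \Leftrightarrow \mathrm{zer}(A+B+C) \neq \emptyset$ is then immediate by combining the two statements.

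There is no serious obstacle here; the only point requiring a little care is the bookkeeping of which shifted/reflected argument goes into which resolvent and getting the signs right when converting $y = J_{\gamma D}(u)$ into $u - y \in \gamma D y$, together with the observation that the \emph{same} auxiliary variable $z$ must witness both inclusions — which is why, in the proof of statement~1, one chooses $z = x - \gamma a$ for a fixed splitting $0 = a + Bx + c$ rather than choosing $z$ from the $C$-equation alone.
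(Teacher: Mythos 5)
Your proposal is correct and follows essentially the same route as the paper: your choice $z = x - \gamma a$ coincides with the paper's $z = x + \gamma c + \gamma Bx$ (since $-a = Bx + c$), and the proof of statement~2 by adding the two resolvent inclusions is identical. The initial false start with $z = x + \gamma c$ is harmless since you correct it, but it could be deleted from a final write-up.
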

\begin{proof}
{\rm Assume that $x \in \mathrm{zer}(A + B + C)$. Then there exist $c \in Cx$ and $a \in Ax$ such that
\[
\gamma(a + c + Bx) = 0.
\]
Let $z = x + \gamma c + \gamma Bx$. Then
\[
x = J_{\gamma C}(z - \gamma Bx).
\]
Moreover, $x + \gamma a = x - \gamma c - \gamma Bx = 2x - z$, which implies that
\[
x = J_{\gamma A}(2x - z).
\]
Therefore, $(z, x) \in \mathcal{U}$.

Conversely, suppose that $(z, x) \in \mathcal{U}$. Then, by the definition of the resolvent, we have
\[
\begin{cases}
z - x - \gamma Bx \in \gamma Cx, \\
x - z \in \gamma Ax.
\end{cases}
\]
Adding the two inclusions yields
\[
0 \in Ax + Bx + Cx,
\]
which implies that $x \in \mathrm{zer}(A + B + C)$. This completes the proof. $\square$}
\end{proof}

Next, to establish the convergence of Algorithm \ref{AA}, we construct a Lyapunov function defined by
\begin{equation}\label{lf}
\phi_k(z^*,x^*):= \| z^{k} - z^*\|^2 + \lambda(2-\lambda)\| y^{k} - x^k\|^2 + 2\gamma\left(\frac{\beta\lambda(8-4\lambda)}{8-3\lambda} -\gamma\right)\|By^{k-1} - Bx^*\|^2,
\end{equation}
where $( z^*,x^*)\in \mathcal{U}$. We then derive a fundamental inequality for $\phi_k(z^*,x^*)$, which is crucial for the convergence analysis.
\begin{lemma}
Let the sequence $\left\{(z^k, x^k, y^k)\right\}_{k=0}^\infty$ be generated by Algorithm \ref{AA}. Then, for any $(z^*, x^*) \in \mathcal{U}$, it holds that
\begin{align}\label{ss}
\phi_{k+1}( z^*,x^*)\leq & \phi_k( z^*,x^*) - \lambda\left(2-\lambda - \frac{8-3\lambda}{2\beta\lambda}\gamma\right) \|y^k - x^k\|^2\nonumber\\
&-2\gamma\left(\frac{\beta\lambda(8-4\lambda)}{8-3\lambda} - 2\gamma\right) \|By^{k-1} - Bx^*\|^2 \nonumber\\
&- \frac{\gamma\left(8-3\lambda\right)}{2\beta\lambda}\left(2-\lambda -\frac{8-3\lambda}{2\beta\lambda}\gamma \right) \|x^{k+1} - x^k\|^2.
\end{align}
\end{lemma}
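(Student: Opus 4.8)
The plan is to derive the descent inequality \eqref{ss} directly from the defining relations \eqref{r1}--\eqref{r3} of Algorithm~\ref{AA} together with the resolvent inclusions at the fixed point $(z^*,x^*)\in\mathcal U$ and the cocoercivity of $B$, expanding $\|z^{k+1}-z^*\|^2$ and regrouping. First I would record the subgradient information: from \eqref{r1}, $z^k-\gamma By^k-x^{k+1}\in\gamma Cx^{k+1}$, from \eqref{r2}, $x^{k+1}-z^k\in\gamma Ay^{k+1}$, and from $(z^*,x^*)\in\mathcal U$, $z^*-\gamma Bx^*-x^*\in\gamma Cx^*$ and $-x^*\in\gamma Ax^*$ (since $x^*=J_{\gamma A}(2x^*-z^*)$ gives $x^*-z^*\in\gamma Ax^*$, and combined with the $C$-inclusion this is consistent). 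Monotonicity of $C$ applied to the pairs $(x^{k+1},\,z^k-\gamma By^k-x^{k+1})$ and $(x^*,\,z^*-\gamma Bx^*-x^*)$ yields one inequality, and monotonicity of $A$ applied to $(y^{k+1},\,x^{k+1}-z^k)$ and $(x^*,\,x^*-z^*)$ yields another; adding them produces a master inequality of the form $\langle\,\cdot\,,\,\cdot\,\rangle\ge0$ involving $x^{k+1}-x^*$, $y^{k+1}-x^*$, $z^k-z^*$, and the $B$-terms.

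Next I would substitute $z^{k+1}-z^* = (z^k-z^*)+(y^{k+1}-x^{k+1})$ into $\|z^{k+1}-z^*\|^2$, obtaining
\[
\|z^{k+1}-z^*\|^2 = \|z^k-z^*\|^2 + 2\langle z^k-z^*,\,y^{k+1}-x^{k+1}\rangle + \|y^{k+1}-x^{k+1}\|^2,
\]
and use the master inequality to bound the cross term $2\langle z^k-z^*,\,y^{k+1}-x^{k+1}\rangle$ from above. The polarization identity \eqref{fp} will be the main tool for converting the various inner products (especially those coupling $z^k-z^*$ with $x^{k+1}-x^*$ and $y^{k+1}-x^{k+1}$) into squared norms so that the terms $\|y^{k+1}-x^{k+1}\|^2$, $\|x^{k+1}-x^k\|^2$, and $\|y^k-x^k\|^2$ appear with the right signs to match \eqref{ss}. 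The $B$-contribution $-\gamma\langle y^k - x^*,\,By^k - Bx^*\rangle$ coming from the forward step must be handled by cocoercivity, $\langle y^k-x^*,\,By^k-Bx^*\rangle\ge\beta\|By^k-Bx^*\|^2$, while the difference $By^{k-1}-Bx^*$ hidden inside $x^k = J_{\gamma C}(z^{k-1}-\gamma By^{k-1})$ must be carried along so that the telescoping term $2\gamma(\tfrac{4\beta}{5}-\gamma)\|By^{k-1}-Bx^*\|^2$ in $\phi_k$ is reproduced at level $k+1$ with coefficient on $\|By^k-Bx^*\|^2$, leaving the residual $-4\gamma(\tfrac{2\beta}{5}-\gamma)\|By^{k-1}-Bx^*\|^2$.

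The crossed terms between consecutive iterates are the delicate point: the forward step at iteration $k$ uses $By^k$, but the $x$-update at iteration $k$ used $By^{k-1}$, so to close the estimate one must express $z^k - x^{k+1}$ or $x^{k+1}-x^k$ using \emph{both} $z^k-x^k-\gamma By^{k-1}\in\gamma Cx^k$ (the $C$-inclusion at the previous step) and the current $C$-inclusion, then invoke monotonicity of $C$ once more on $(x^{k+1},x^k)$. This introduces a term $-\gamma\langle x^{k+1}-x^k,\,By^k-By^{k-1}\rangle$, which I would split via Young's inequality with parameter $\varepsilon$ tuned to $5\gamma/(2\beta)$, and a term controlled again by cocoercivity. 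The main obstacle I anticipate is bookkeeping the coefficients: one has to choose the Young's inequality weights and allocate the cocoercivity slack $\beta\|By^k-Bx^*\|^2$ between the telescoped term and the leftover term so that everything collapses to exactly the four coefficients $(1-\tfrac{5\gamma}{2\beta})$, $4\gamma(\tfrac{2\beta}{5}-\gamma)$, $\tfrac{5\gamma}{2\beta}(1-\tfrac{5\gamma}{2\beta})$, and $2\gamma(\tfrac{4\beta}{5}-\gamma)$; the constant $\tfrac{2\beta}{5}$ in the stepsize bound is precisely what makes all these coefficients nonnegative, and verifying this alignment is where the real work lies. Once the algebra is organized, \eqref{ss} follows by collecting terms.
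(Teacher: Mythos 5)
Your plan follows essentially the same route as the paper's proof: the two resolvent inequalities (equivalently, monotonicity of $A$ and $C$ tested against the fixed point $(z^*,x^*)$), expansion of $\|z^{k+1}-z^*\|^2$ via the $z$-update, cocoercivity applied to $\langle y^k-x^*,\,By^k-Bx^*\rangle$, a second application of the $C$-resolvent inequality to consecutive iterates to control $\|x^{k+1}-y^k\|^2$ and generate the cross term $\langle x^{k+1}-x^k,\,By^{k-1}-By^k\rangle$, and Young's inequality with the weights you indicate to close the estimate. One transcription slip to fix when writing out the details: the inclusion from \eqref{r2} should read $2x^{k+1}-z^k-y^{k+1}\in\gamma Ay^{k+1}$ rather than $x^{k+1}-z^k\in\gamma Ay^{k+1}$ (and likewise the fixed-point inclusion is $x^*-z^*\in\gamma Ax^*$, not $-x^*\in\gamma Ax^*$).
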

\begin{proof}
\rm{
For any $( z^*,x^*)\in\mathcal{U}$, by using the FNE of the $J_{\gamma  A}$ and $J_{\gamma C}$, along with \eqref{r1} and \eqref{r2}, we have
\begin{equation}\label{m01}
  \langle  x^{k+1}-x^*,  z^k-\gamma {By}^k- z^*+\gamma{Bx}^*\rangle\ge \|x^{k+1}-x^*\|^2,
\end{equation}
\begin{align}
2&\langle  y^{k+1}- x^{*},  x^{k+1}- x^*\rangle-\langle y^{k+1}- x^{*}, z^k- z^*\rangle\ge \| y^{k+1}- x^*\|^2.\label{m2}
\end{align}
Summing \eqref{m01} and \eqref{m2}, we obtain that
\begin{equation}\label{m1}
\langle y^{k+1}- x^{k+1}, z^k- z^*\rangle\le -\| y^{k+1}- x^{k+1}\|^2+\gamma \langle  x^{k+1}-x^*,  {Bx}^*-{By}^k\rangle.
\end{equation}
By utilizing equation \eqref{fp} and \eqref{r3}, we can deduce that
\begin{equation}\label{lp}
\begin{aligned}
\langle y^{k+1}- x^{k+1}, z^k- z^*\rangle&=\frac{1}{\lambda}\langle z^{k+1}- z^{k}, z^k- z^*\rangle\\
&=\frac{1}{2\lambda}\left(\|z^{k+1}-z^*\|^2-\|z^k-z^*\|^2-\lambda^2\|y^{k+1}-x^{k+1}\|^2\right).
\end{aligned}
\end{equation}
Substituting \eqref{lp} into \eqref{m1}, we have
\begin{equation}\label{k}
\|z^{k+1}-z^*\|^2+\lambda(2-\lambda)\|y^{k+1}-x^{k+1}\|^2\le \|z^k-z^*\|^2+2\gamma\lambda \langle  x^{k+1}-x^*,  {Bx}^*-{By}^k\rangle.
\end{equation}
Next, we handle the final cross term on the right-hand side of inequality \eqref{k}. By applying Young's inequality, we have:
\begin{equation}\label{cb}
\begin{aligned}
\langle  x^{k+1}-x^*,  {Bx}^*-{By}^k\rangle&=\langle  x^{k+1}-y^k+y^k-x^*,  {Bx}^*-{By}^k\rangle\\
&\le \langle  x^{k+1}-y^k,{Bx}^*-{By}^k\rangle-\beta\|{Bx}^*-{By}^k\|^2\\
&\le \frac{8-3\lambda}{4\beta\lambda}\|x^{k+1}-y^k\|^2-\frac{4(2-\lambda)\beta}{8-3\lambda}\|{Bx}^*-{By}^k\|^2.
\end{aligned}
\end{equation} 
Utilizing \eqref{r1} and the fact that resolvents is FNE, we have 
\begin{equation}\label{ct}
\left\langle x^{k+1}-x^k,z^{k}-z^{k-1} +\gamma (By^{k-1}-By^k)\right \rangle\ge \|x^k-x^{k+1}\|^2.
\end{equation}
Combining \eqref{r3} and \eqref{fp}, we can obtain that
\begin{equation}\label{cc}
\begin{aligned}
\langle x^{k+1}-x^k,z^{k}-z^{k-1} \rangle&={\lambda}\langle x^{k+1}-x^k,y^{k}-x^k \rangle\\
&=\frac{\lambda}{2}\left(\|x^{k+1}-x^k\|^2+\|x^{k}-y^k\|^2-\|x^{k+1}-y^k\|^2\right).
\end{aligned}
\end{equation}
Substituting \eqref{cc} into \eqref{ct}, we have
\begin{equation}\label{cl}
\begin{aligned}
\|x^{k+1}-y^k\|^2&=\|x^{k+1}-x^k\|^2+\|x^{k}-y^k\|^2-2\langle x^{k+1}-x^k,y^{k}-x^{k} \rangle\\
&\le\left(1-\frac{2}{\lambda}\right)\|x^{k+1}-x^k\|^2+\|x^{k}-y^k\|^2+\frac{2\gamma}{\lambda} \langle x^{k+1}-x^k,By^{k-1}-By^k \rangle.
\end{aligned}
\end{equation}
By multiplying both sides of inequality~(\ref{cl}) by $\frac{8-3\lambda}{4\beta\lambda}$ and adding inequality~(\ref{cb}), we obtain
\begin{equation}\label{ck}
\begin{aligned}
\langle  x^{k+1}-x^*,  {Bx}^*-{By}^k\rangle\le& -\frac{(8-3\lambda)(2-\lambda)}{4\beta\lambda^2}\|x^{k+1}-x^k\|^2+\frac{8-3\lambda}{4\beta\lambda}\|x^{k}-y^k\|^2\\
&-\frac{4(2-\lambda)\beta}{8-3\lambda}\|{Bx}^*-{By}^k\|^2\\
&+\frac{(8-3\lambda)\gamma}{2\beta\lambda^2} \langle x^{k+1}-x^k,By^{k-1}-By^k \rangle.
\end{aligned}
\end{equation} 
It follows from \eqref{ck} and \eqref{k} that
\begin{equation}\label{oo}
\begin{aligned}
\|z^{k+1}-z^*\|^2+\lambda(2-\lambda)\|y^{k+1}-x^{k+1}\|^2\le& \|z^k-z^*\|^2-\frac{(8-3\lambda)(2-\lambda)\gamma}{2\beta\lambda}\|x^{k+1}-x^k\|^2\\
&-\frac{8(2-\lambda)\beta\lambda\gamma}{8-3\lambda}\|{Bx}^*-{By}^k\|^2\\
&+\frac{(8-3\lambda)\gamma^2}{\beta\lambda} \langle x^{k+1}-x^k,By^{k-1}-By^k \rangle\\
&+\frac{(8-3\lambda)\gamma}{2\beta}\|x^{k}-y^k\|^2.
\end{aligned}
\end{equation}
Last, we estimate the finally cross term   on the right-hand side of \eqref{oo}. Using the Young's  inequality, we have:
\begin{equation}\label{qw}
\begin{aligned}
  \langle x^{k+1}-x^k,By^{k-1}-By^k\rangle=&\langle x^{k+1}-x^k,By^{k-1}-Bx^*\rangle+\langle x^{k+1}-x^k,Bx^*-By^k\rangle\\
\le&\frac{8-3\lambda}{4\beta\lambda}\|x^{k+1}-x^k\|^2+\frac{2\beta\lambda}{8-3\lambda}\|By^k-Bx^*\|^2\\
&+\frac{2\beta\lambda}{8-3\lambda}\|By^{k-1}-Bx^*\|^2.
\end{aligned}
\end{equation}
Substituting \eqref{qw} into \eqref{oo} and the definition of $\phi_k( z^*,x^*)$,  we have
\begin{equation}\label{pl}
\begin{aligned}
\phi_{k+1}( z^*,x^*)\le& \|z^k-z^*\|^2+\frac{(8-3\lambda)\gamma}{2\beta}\|x^{k}-y^k\|^2+2\gamma^2\|By^{k-1}-Bx^*\|^2\\
&-\frac{(8-3\lambda)\gamma}{2\beta\lambda}\left(2-\lambda-\frac{(8-3\lambda)\gamma}{2\beta\lambda}\right)\|x^{k+1}-x^k\|^2.
\end{aligned}
\end{equation}
Reorganizing the terms in inequality \eqref{pl} concludes the proof.
}
\end{proof}
\begin{theorem}\label{ct1}
Let $\{(z^k, x^k, y^k)\}_{k=0}^\infty$ be the sequence generated by Algorithm~\ref{AA}. Then the following statements hold:

{\rm{1}}. $\lim\limits_{k\to\infty}\| x^k- y^k\|=0$, $\lim\limits_{k\to\infty}\| x^{k+1}- x^k\|=0$, and $\lim\limits_{k\to\infty} \|By^k - Bx^*\| = 0$ for any $x^* \in \operatorname{zer}(A+B+C)$.\vspace{0.8ex}
 
{\rm{2}}. There exist $\tilde{x} \in \operatorname{zer}(A+B+C)$ and $\tilde{z} \in \mathcal{H}$ such that $(\tilde{z}, \tilde{x}) \in \mathcal{U}$ and $x^k \rightharpoonup \tilde{x}$.

{\rm{3}}. Assume that for some $\varepsilon > 0$ and a positive integer $T$, the residuals of the first $T$ iterations satisfy $\|y^k - x^k\| > \varepsilon$ for all $k = 1, \dots, T$. Then the number of iterations required to achieve $\|y^{T+1} - x^{T+1}\| \le \varepsilon$ satisfies $T = O(\varepsilon^{-2})$.

{\rm{4}}. The sequence  $\{x^k\}_{k=0}^\infty$  converges strongly to a point in ${\rm zer}(A+B+C)$ whenever any of the following holds:

{\rm (i)}. $C$ is uniformly monotone on {\rm dom}$C$;

{\rm (ii)}. $A$ is uniformly monotone on {\rm dom}$A$;

{\rm (iii)}. $B$ is  demiregular at every point $x\in{\rm zer}(A+B+C)$. 
\end{theorem}
\begin{proof}
{\rm 1: By rearranging the terms in \eqref{ss}, we obtain
\begin{equation}\label{m6}
\begin{aligned}
&\lambda\left(2-\lambda - \frac{8-3\lambda}{2\beta\lambda}\gamma\right) \|y^k - x^k\|^2+2\gamma\left(\frac{\beta\lambda(8-4\lambda)}{8-3\lambda} - 2\gamma\right) \|By^{k-1} - Bx^*\|^2\\
&+ \frac{\gamma\left(8-3\lambda\right)}{2\beta\lambda}\left(2-\lambda -\frac{8-3\lambda}{2\beta\lambda}\gamma \right) \|x^{k+1} - x^k\|^2\\
 \le& \phi_k( z^*,x^*)-\phi_{k+1}( z^*,x^*).
\end{aligned}
\end{equation}
Since the stepsize $\gamma$ and the relaxed parameter $\lambda$ satisfy $\gamma \in \left(0,\frac{2\beta\lambda(2-\lambda)}{8-3\lambda}\right)$ and $\lambda\in(0,2)$, it follows that both $2-\lambda - \frac{8-3\lambda}{2\beta\lambda}\gamma$ and $\frac{\beta\lambda(8-4\lambda)}{8-3\lambda} - 2\gamma$ are positive. Consequently, from \eqref{m6}, we deduce that
\begin{equation}\label{suma}
\sum_{k=1}^{\infty}\|x^k - y^k\|^2 < \infty, \quad \sum_{k=1}^{\infty}\|x^{k+1} - x^k\|^2 < \infty, \quad \sum_{k=1}^{\infty}\|By^k - Bx^*\|^2 < \infty.
\end{equation}
It follows from \eqref{suma} that
\[
\lim_{k\to\infty} \|x^k - y^k\| = 0, \quad \lim_{k\to\infty} \|x^{k+1} - x^k\| = 0, \quad \lim_{k\to\infty} \|By^k - Bx^*\| = 0,
\]
where $x^*$ is any given element of $\operatorname{zer}(A+B+C)$.

2: Since the sequence $\{\phi_k( z^*,x^*)\}_{k=1}^\infty$ is positive and non-increasing, the sequence converges. By Conclusion~1 and the definition of $\phi_k( z^*,x^*)$, this implies that the sequence $\{\|z^k - z^*\|\}_{k=1}^\infty$ converges for any given $z^* \in Q_A(\mathcal{U})$, where $Q_A$ maps $(z,x) \in \mathcal{H}^2$ to $z \in \mathcal{H}$. Furthermore, it follows that the sequences $\{z^k\}_{k=1}^\infty$ and $\{By^k\}_{k=1}^\infty$ are bounded in $\mathcal{H}$. By the continuity of $J_{\gamma C}$ and $J_{\gamma A}$, the sequences $\{x^k\}_{k=1}^\infty$ and $\{y^k\}_{k=1}^\infty$ are also bounded. Let $z^\infty$, $b^\infty$, and $x^\infty$ denote weak sequential cluster points of $\{z^k\}_{k=1}^\infty$, $\{By^k\}_{k=1}^\infty$, and $\{x^k\}_{k=1}^\infty$, respectively. Without loss of generality, we assume that
\[
z^{k_j} \rightharpoonup z^\infty,~ x^{k_j} \rightharpoonup x^\infty,~ By^{k_j} \rightharpoonup b^\infty,~ \text{as}~j\to\infty.
\]
From Conclusion~1, it follows that
\begin{equation*}\label{lim2}
x^{k_j+1} \rightharpoonup x^\infty,~y^{k_j} \rightharpoonup x^\infty,~ y^{k_j+1} \rightharpoonup x^\infty,~ \text{and} \quad y^{k_j} - y^{k_j+1} \to 0,~ \text{as}~j\to\infty.
\end{equation*}
By the definitions of the resolvents in \eqref{r1} and \eqref{r2}, we derive
\begin{equation}\label{cl0}
\pmb{r}^{k_j}
  \in(\mathbf{F}+\mathbf{G})\begin{pmatrix}
 z^{k_j}-x^{k_j+1}-\gamma By^{k_j}\\
 \gamma By^{k_j}\\
  y^{k_j+1}
 \end{pmatrix},
\end{equation}
 where
\[
\pmb{r}^{k_j} = \begin{pmatrix}
x^{k_j+1} - y^{k_j+1} \\
y^{k_j} - y^{k_j+1} \\
x^{k_j+1} - y^{k_j+1}
\end{pmatrix}
\]
and the operators $\mathbf{F}, \mathbf{G}: \mathcal{H}^3 \to 2^{\mathcal{H}^3}$ are defined by 
\[
\mathbf{F} + \mathbf{G} := \begin{pmatrix}
(\gamma C)^{-1} &O&O\\
O&(\gamma B)^{-1}&O \\
O&O&\gamma A
\end{pmatrix} + 
\begin{pmatrix}
O & O & -\operatorname{Id} \\
O & O & -\operatorname{Id} \\
\operatorname{Id} & \operatorname{Id} & O
\end{pmatrix}.
\]
Since $\operatorname{dom}\mathbf{G} = \mathcal{H}^3$, it can be concluded that the sum $\mathbf{F}+\mathbf{G}$ is maximal monotone by Lemma~\ref{l00}. Passing to the limit as $j \to \infty$ in equation~\eqref{cl0}, it can be deduced from Lemma~\ref{l0} that
\begin{equation}\label{inc1}
\begin{cases}
   z^\infty-\gamma b^\infty \in x^\infty+\gamma Cx^\infty,\vspace{1ex}\\
  x^\infty\in (\gamma B)^{-1} (\gamma b^\infty), \vspace{1ex}\\
  -z^\infty+ x^\infty\in \gamma  A x^\infty.
\end{cases}
\end{equation}
The inclusion \eqref{inc1} can be rewritten equivalently as
\begin{equation*}
\begin{cases}
 J_{\gamma C}( z^\infty- \gamma b^\infty)= x^\infty,\vspace{1ex}\\
  B x^\infty= b^\infty, \vspace{1ex}\\
  J_{\gamma  A}(2 x^\infty- z^\infty)= x^\infty.
\end{cases}
\end{equation*}
Therefore, it can be concluded that $(z^\infty, x^\infty) \in \mathcal{U}$. Consequently, it follows from Lemma \ref{l1} that there exists $\tilde{z} \in Q_A(\mathcal{U})$ such that $z^k \rightharpoonup \tilde{z}$.

Finally, we prove the uniqueness of the weak cluster point of the sequence $\{x^k\}_{k=1}^\infty$. 
Let $\tilde{x}$ be an arbitrary weak cluster point of $\{x^k\}_{k=1}^\infty$. We aim to show that
\(
\tilde{x} = J_{\gamma (B+C)}(\tilde{z}),
\)
where $\tilde{z}$ is the weak limit of $\{z^k\}$. Without loss of generality, assume that $x^{k_t} \rightharpoonup \tilde{x}$ as $t\to\infty$. By Conclusion~1, we have $y^{k_t}\rightharpoonup \tilde{x}$. Repeating the argument in \eqref{cl0}--\eqref{inc1} for $(x^{k_t}, y^{k_t}, z^{k_t})$ and using $z^k \rightharpoonup \tilde{z}$, we obtain $(\tilde{z}, \tilde{x}) \in \mathcal{U}$, that is,
\[
\tilde{x} = J_{\gamma  C}(\tilde{z} - \gamma B\tilde{x}).
\]
By the definition of the resolvent, we have
\[
\tilde{x} = J_{\gamma (B+C)}(\tilde{z}).
\]
Therefore, the sequence $\{x^k\}_{k=1}^\infty$ admits a unique weak cluster point, namely $J_{\gamma (C+B)}(\tilde{z})$, which implies $x^k \rightharpoonup \tilde{x}$ as $k\to\infty$. Moreover, by Lemma~\ref{kkt}, it follows that $\tilde{x} \in \operatorname{zer}(A+B+C)$.  

3: Let \(c = \lambda\left(2 - \lambda - \frac{8-3\lambda}{2\beta\lambda}\gamma\right)\). 
Since the stepsize $\gamma$ and the relaxed parameter $\lambda$ satisfy
$\gamma \in \left(0, \frac{2\beta\lambda(2-\lambda)}{8-3\lambda}\right), \quad \lambda \in (0,2)$, it follows that $c$ and $\frac{\beta\lambda(8-4\lambda)}{8-3\lambda} - 2\gamma$ are positive. By inequality \eqref{m6}, for any $k \in \mathbb{N}$, we have
\begin{equation}\label{m66}
  c \|y^k - x^k\|^2 \le \phi_k(z^*, x^*) - \phi_{k+1}(z^*, x^*). 
\end{equation}
Suppose a positive integer $T$ satisfies $\|x^k - y^k\| > \varepsilon$ for $k = 1, 2, \dots, T$, and let $T_0$ be an upper bound of $T$. Then, when $k = T_0 + 1$, we have $\|y^k - x^k\| \le \varepsilon$. Summing both sides of inequality \eqref{m66} for $k = 1$ to $T$, we obtain
\[
c \sum_{k=1}^{T} \|y^k - x^k\|^2 \le \phi_1(z^*, x^*) - \phi_{T+1}(z^*, x^*) \le \phi_1(z^*, x^*).
\]
Since $\|x^k - y^k\| > \varepsilon$ for every $k \in \{1, 2, \dots, T\}$, the above inequality yields
\[
T< \frac{\phi_1(z^*, x^*)}{c \varepsilon^2}.
\]
Therefore, the upper bound $T_0$ of $T$ can be taken as $O(\varepsilon^{-2})$.

4: (i). Since $C$ is uniformly monotone, there exists a nonnegative, monotonically increasing function $\phi_C$ with $\phi_C(0)=0$ such that $\langle x-y, u-v\rangle \ge \phi_C(\|x-y\|)$ holds for all $(x,u), (y,v)\in\operatorname{gra} C$. Taking $(x^*,z^*)\in \mathcal{U}$, it follows from the definitions of $x^{k+1}$ and $\mathcal{U}$ that we have
\begin{equation}\label{au}
\begin{aligned}
\gamma \phi_C(\|x^{k+1}-x^*\|) &\le \langle x^{k+1}-x^*, z^k-z^*+x^*-x^{k+1}+\gamma Bx^*-\gamma By^k\rangle \\
&= \langle x^{k+1}-y^{k+1}, z^k-z^*+x^*-x^{k+1}+\gamma Bx^*-\gamma By^k\rangle \\
&\quad + \langle y^{k+1}-x^*, z^k-z^*+x^*-x^{k+1}+\gamma Bx^*-\gamma By^k\rangle \\
&= \langle x^{k+1}-y^{k+1}, z^k-z^*+x^*-x^{k+1}+\gamma Bx^*-\gamma By^k\rangle \\
&\quad + \langle y^{k+1}-x^*, z^k-2x^{k+1}+y^{k+1}-z^*+x^*\rangle \\
&\quad + \langle y^{k+1}-x^*, x^{k+1}-y^{k+1}\rangle + \gamma \langle y^{k+1}-x^*, Bx^*- By^k\rangle \\
&\le \|x^{k+1}-y^{k+1}\|\Bigl(\|z^k-z^*\|+\|x^{k+1}-x^*\| + \|y^{k+1}-x^*\|\Bigr) \\
&\quad + \gamma\|By^k-Bx^*\|\left(\|y^{k+1}-x^*\|+\|x^{k+1}-y^{k+1}\|\right)\\
&\quad + \langle y^{k+1}-x^*, z^k-2x^{k+1}+y^{k+1}-z^*+x^*\rangle,
\end{aligned}
\end{equation}
The last inequality follows from the Cauchy-Schwarz inequality and the triangle inequality. Using the monotonicity of $A$ and the definitions of $y^{k+1}$ and $\mathcal{U}$, we have
$$
\langle y^{k+1}-x^*, z^k-2x^{k+1}+y^{k+1}-z^*+x^*\rangle \le 0.
$$
Substituting this into inequality \eqref{au}, we obtain
\begin{equation}\label{au_new}
\begin{aligned}
\gamma \phi_C(\|x^{k+1}-x^*\|) 
&\le\|x^{k+1}-y^{k+1}\|\Bigl(\|z^k-z^*\|+\|x^{k+1}-x^*\| + \|y^{k+1}-x^*\|\Bigr) \\
&\quad + \gamma\|By^k-Bx^*\|\left(\|y^{k+1}-x^*\|+\|x^{k+1}-y^{k+1}\|\right) \to 0,~ k\to\infty,
\end{aligned}
\end{equation}
where the convergence follows from $\lim_{k\to\infty}\|y^k-x^k\| = \lim_{k\to\infty} \|By^k-Bx^*\| = 0$ and the boundedness of the sequence $\{(x^k,y^k,z^k)\}_{k=0}^\infty$. By the monotonicity of $\phi_C$ and the fact that $\phi_C(0)=0$, we deduce that $x^{k+1}\to x^*$ as $k\to\infty$. 

(ii). By an argument analogous to the one above, replacing $C$ by $A$ and using the definition of $y^{k+1}$, we conclude that 
\begin{equation}\label{au1}
\begin{aligned}
\gamma \phi_A(\|y^{k+1}-x^*\|) &\le \langle y^{k+1}-x^*, z^*-z^k+x^{k+1}-x^*+x^{k+1}-y^{k+1}\rangle \\
&= \langle y^{k+1}-x^{k+1}, z^*-z^k+x^{k+1}-x^*+x^{k+1}-y^{k+1}\rangle \\
&\quad + \langle x^{k+1}-x^*, z^*-z^k+x^{k+1}-x^*+x^{k+1}-y^{k+1}\rangle \\
&= \langle x^{k+1}-y^{k+1}, z^k-z^*+x^*-x^{k+1}\rangle-\|x^{k+1}-y^{k+1}\|^2 \\
&\quad +  \langle x^{k+1}-x^*, z^*-x^*-\gamma Bx^*-z^k + x^{k+1}+\gamma By^k\rangle\\
&\quad +\langle x^{k+1}-x^*,x^{k+1}-y^{k+1} +\gamma Bx^*-\gamma By^k\rangle\\
&\le \|x^{k+1}-y^{k+1}\|\Bigl(\|z^k-z^*\|+\|x^{k+1}-x^*\|\Bigr) - \|y^{k+1}-x^{k+1}\|^2 \\
&\quad +  \langle x^{k+1}-x^*, z^*-x^*-\gamma Bx^*-z^k + x^{k+1}+\gamma By^k\rangle\\
&\quad + \|x^{k+1}-x^*\|\left(\|x^{k+1}-y^{k+1}\| +\gamma \|Bx^*- By^k\|\right),
\end{aligned}
\end{equation}
where $(z^*,x^*)\in \mathcal{U}$, and the last inequality follows from the Cauchy-Schwarz inequality and the triangle inequality. Using the monotonicity of $C$ and the definitions of $x^{k+1}$ and $\mathcal{U}$, we have
$$
\langle x^{k+1}-x^*,z^*-x^*-\gamma Bx^*-z^k + x^{k+1}+\gamma By^k\rangle \le 0.
$$
Substituting this into inequality \eqref{au1}, we obtain
\begin{equation}\label{au_new1}
\begin{aligned}
\gamma \phi_A(\|y^{k+1}-x^*\|) 
&\le \|x^{k+1}-y^{k+1}\|\Bigl(\|z^k-z^*\|+\|x^{k+1}-x^*\|\Bigr) - \|y^{k+1}-x^{k+1}\|^2 \\
&\quad + \|x^{k+1}-x^*\|\left(\|x^{k+1}-y^{k+1}\| +\gamma \|Bx^*- By^k\|\right) \to 0,~ k\to\infty,
\end{aligned}
\end{equation}
where the convergence follows from $\lim_{k\to\infty}\|y^k-x^k\| = \lim_{k\to\infty} \|By^k-Bx^*\| = 0$ and the boundedness of the sequence $\{(x^k,y^k,z^k)\}_{k=0}^\infty$. By the monotonicity of $\phi_A$ and the fact that $\phi_A(0)=0$, we deduce that $x^{k+1}\to x^*$ as $k\to\infty$.

(iii). From the conclusion of Part 2, let $x$ denote the weak limit of $\{x^k\}_{k=0}^\infty$; then $x\in\operatorname{zer}(A+B+C)$. Then from the conclusion of Part 1, we have that $\{y^k\}_{k=0}^\infty$ converges weakly to $x$ and $\{By^k\}_{k=0}^\infty$ converges strongly to $Bx$. Therefore, by the demiregularity of $B$, we obtain that $\{y^k\}_{k=0}^\infty$ converges strongly to $x$, and hence $\{x^k\}_{k=0}^\infty$ converges strongly to $x$ as well. This completes the proof.
} 
\end{proof}
\begin{remark}
As can be seen from equations \eqref{cl}--\eqref{inc1}, when \(x^k \approx y^k\), both \(x^k\) and \(y^k\) can be regarded as approximate solutions to the original monotone inclusion problem \eqref{cp}. Therefore, we may set the stopping criterion of the algorithm as $\frac{\|y^k - x^k\|^2}{\|x^k\|^2 + 1} \leq \varepsilon.$
\end{remark}
\subsection{Convergence rates}
An important class of applications of splitting algorithms for monotone operators lies in convex optimization. This follows from the fact that, for any proper, lower semicontinuous, and convex function $f$, its subdifferential operator $\partial f$ is maximally monotone. 

In this section, we present the convergence rate analysis of Algorithm~\ref{AA} for solving a three-block composite convex optimization problem. Specifically, we consider the problem
\begin{equation}\label{co}
\min_{x\in\mathcal{H}} \left\{F(x):= f(x) + g(x) + h(x)\right\},
\end{equation}
where $f,g,h\in\Gamma_0(\mathcal{H})$, $h$ is Fr\'{e}chet differentiable, and its gradient $\nabla h$ is $\beta^{-1}$-Lipschitz continuous. Assuming that $0\in{\rm sri}({\rm dom}f - {\rm dom}g)$, the convex optimization problem~\eqref{co} is equivalent to the following monotone inclusion problem:
\begin{equation}\label{zp}
0\in \partial F(x)=\partial f(x) + \partial g(x) + \nabla h(x).
\end{equation}

By setting $A = \partial g$, $B = \nabla h$, and $C = \partial f$ in Algorithm~\ref{AA}, we obtain the following iteration scheme for solving~\eqref{zp}:
\begin{numcases}{ }
x^{k+1} = \arg\min_{x\in\mathcal{H}}\left\{ f(x)+\langle x-y^k,\nabla h(y^k)\rangle+\frac{1}{2\gamma}\|x-z^k\|^2\right\},\label{xf1} \\
y^{k+1} = \arg\min_{y\in\mathcal{H}}\left\{ g(y)+\frac{1}{2\gamma}\|2x^{k+1}-z^k-y\|^2\right\},\label{yf1} \\
z^{k+1} = z^{k}+\lambda\left(y^{k+1}-x^{k+1}\right).\label{zz}
\end{numcases}
As will be seen in Remark~\ref{re3.1}, Algorithm~\ref{AA} serves as a general framework that encompasses the FRB and RFB methods as special cases. Consequently, establishing its convergence rate directly yields the rates for these methods.

The following lemma  establishes key properties of the objective function for the set $\mathcal{U}$ (defined in Lemma~\ref{kkt}) under the assumptions $A = \partial g$, $B = \nabla h$, and $C = \partial f$.
\begin{lemma}
For any $(z^*, x^*) \in \mathcal{U}$ and any $x, y \in \mathcal{H}$, the following inequality is satisfied:
\begin{equation}\label{efr1}
f(x)+g(y)+h(x)-F(x^*)+\frac{1}{\gamma}\langle z^*-x^*,y-x\rangle\ge 0.
\end{equation}  
\end{lemma}
\begin{proof}
{\rm From the definition of $\mathcal{U}$, we have
\[
\begin{cases}
\frac{1}{\gamma}(z^* - x^*) - \nabla h(x^*) \in \partial f(x^*), \\[3pt]
\frac{1}{\gamma}(x^* - z^*) \in \partial g(x^*).
\end{cases}
\]
By the definition of the subdifferential of a convex function, we obtain
\begin{align}
f(x) &\ge f(x^*) + \frac{1}{\gamma} \langle z^* - x^* - \gamma \nabla h(x^*),\, x - x^* \rangle,~\forall x \in \mathcal{H}, \label{si1} \\[4pt]
g(y) &\ge g(x^*) + \frac{1}{\gamma} \langle x^* - z^*,\, y - x^* \rangle,~ \forall y \in \mathcal{H}. \label{si2}
\end{align}
Summing inequalities~\eqref{si1} and~\eqref{si2}, we obtain
\begin{equation}\label{fc}
f(x) + g(y) \ge f(x^*) + g(x^*) + \frac{1}{\gamma} \langle x^* - z^*,\, y - x \rangle + \langle \nabla h(x^*),\, x^* - x \rangle.
\end{equation}
From the convexity of $h$, it follows that
\begin{equation}\label{gc}
h(x) \ge h(x^*) + \langle \nabla h(x^*),\, x - x^* \rangle,~ \forall x \in \mathcal{H}.
\end{equation}
Substituting~\eqref{gc} into~\eqref{fc}, we get
\begin{equation*}\label{fc1}
f(x) + g(y) + h(x) \ge f(x^*) + g(x^*) + h(x^*) + \frac{1}{\gamma} \langle x - y,\, z^* - x^* \rangle.
\end{equation*}
This completes the proof.}
\end{proof}

We now analyze the evolution of the objective function in \eqref{co} along the iterates generated by Algorithm \ref{AA}.\begin{lemma}
Let $\{ (z^k, x^k, y^k) \}_{k=0}^\infty$ denote the sequence generated by \eqref{xf1}-\eqref{zz}. Then, for any $(z^*, x^*) \in \mathcal{U}$, the following inequality is satisfied:
\begin{equation}\label{efr2}
\begin{aligned}
f(x^{k+1}) + g(y^{k+1}) + h(x^{k+1})  \hspace{-.25ex}- \hspace{-.25ex} F(x^*) 
\le& \frac{1}{2\beta} \| x^{k+1} - y^{k} \|^2 
+ \frac{1}{2\gamma\lambda} \hspace{-.25ex}\big( \| z^k - z^* \|^2 \hspace{-.25ex} - \hspace{-.25ex} \| z^{k+1} - z^* \|^2 \big)\\
&+\frac{1}{\gamma}\langle  z^*-x^*,x^{k+1}-y^{k+1} \rangle.
\end{aligned}
\end{equation}
\end{lemma}
\begin{proof}
{\rm Based on the first-order optimality conditions of subproblems~\eqref{xf1} and~\eqref{yf1}, we have
\begin{equation*}
(z^k - x^{k+1}) - \gamma \nabla h(y^k) \in \gamma \partial f(x^{k+1}),
\end{equation*}
and
\begin{equation*}
2x^{k+1} - z^k - y^{k+1} \in \gamma \partial g(y^{k+1}).
\end{equation*}
For any $(z^*, x^*) \in \mathcal{U}$, by the definition of the subdifferential operator for a convex function, it follows that
\begin{equation}\label{fv1}
\begin{aligned}
f(x^*) \ge& f(x^{k+1}) + \frac{1}{\gamma} \langle x^* - x^{k+1},\, z^k - x^{k+1} \rangle + \langle x^{k+1} - x^*,\, \nabla h(y^k) \rangle,
\end{aligned}
\end{equation}
and
\begin{equation}\label{gv1}
\begin{aligned}
g(x^*) \ge& g(y^{k+1}) + \frac{1}{\gamma} \langle x^* - y^{k+1},\, x^{k+1} - z^k \rangle + \frac{1}{\gamma} \langle x^* - y^{k+1},\, x^{k+1} - y^{k+1} \rangle.
\end{aligned}
\end{equation}
By summing~\eqref{fv1} and~\eqref{gv1}, we obtain
\begin{equation}\label{t1}
\begin{aligned}
f(x^*) + g(x^*) \ge& f(x^{k+1}) + g(y^{k+1}) + \frac{1}{\gamma} \langle y^{k+1} - x^{k+1},\, z^k - x^{k+1} \rangle + \langle x^{k+1} - x^*,\, \nabla h(y^k) \rangle \\
&+ \frac{1}{\gamma} \langle x^* - y^{k+1},\, x^{k+1} - y^{k+1} \rangle \\
=& f(x^{k+1}) + g(y^{k+1}) + \frac{1}{\gamma} \| x^{k+1} - y^{k+1} \|^2 + \langle x^{k+1} - x^*,\, \nabla h(y^k) \rangle \\
&+ \frac{1}{\gamma} \langle x^{k+1} - y^{k+1},\, x^* - z^k \rangle.
\end{aligned}
\end{equation}
Next, we estimate the remaining cross terms in~\eqref{t1}. Using~\eqref{fp} and~\eqref{zz}, we have
\begin{equation}\label{t2}
\begin{aligned}
\langle x^* - z^k,\, x^{k+1} - y^{k+1} \rangle
=& \langle x^* - z^*,\, x^{k+1} - y^{k+1} \rangle + \langle z^* - z^k,\, x^{k+1} - y^{k+1} \rangle \\
=& \langle x^* - z^*,\, x^{k+1} - y^{k+1} \rangle +\frac{1}{\lambda} \langle z^* - z^k,\, z^k - z^{k+1} \rangle \\
=& \langle x^* - z^*,\, x^{k+1} - y^{k+1} \rangle + \frac{1}{2\lambda} \Big( \| z^{k+1} - z^* \|^2 - \| z^k - z^* \|^2\\
& -\lambda^2 \| x^{k+1} - y^{k+1} \|^2 \Big).
\end{aligned}
\end{equation}
By the convexity of $h$ and the descent lemma, we have
\begin{equation}\label{d1}
\begin{aligned}
\langle x^{k+1} - x^*,\, \nabla h(y^k) \rangle
=& \langle x^{k+1} - y^k,\, \nabla h(y^k) \rangle + \langle y^k - x^*,\, \nabla h(y^k) \rangle \\
\ge& h(x^{k+1}) - h(y^k) - \frac{1}{2\beta} \| y^k - x^{k+1} \|^2 + h(y^k) - h(x^*) \\
=& h(x^{k+1}) - h(x^*) - \frac{1}{2\beta} \| y^k - x^{k+1} \|^2.
\end{aligned}
\end{equation}
Substituting~\eqref{t2} and~\eqref{d1} into~\eqref{t1} and simplifying, we obtain
\begin{equation}\label{t3}
\begin{aligned}
f(x^*) + g(x^*) + h(x^*) \ge& f(x^{k+1}) + g(y^{k+1}) + h(x^{k+1})
+ \frac{1}{\gamma} \langle x^* - z^*,\, x^{k+1} - y^{k+1} \rangle \\
&+ \frac{1}{2\gamma\lambda} \left( \| z^{k+1} - z^* \|^2 - \| z^k - z^* \|^2\right) + \frac{2-\lambda}{2\gamma}\| x^{k+1} - y^{k+1} \|^2\\
&- \frac{1}{2\beta} \| x^{k+1} - y^k \|^2.
\end{aligned}
\end{equation}
Rearranging the terms in inequality~\eqref{t3} completes the proof.

}
\end{proof}

Finally, building on the previously established descent properties of Algorithm \ref{AA} and the structural characteristics of the problem, we derive the convergence rate of Algorithm \ref{AA} .
\begin{theorem}
Let $\left\{ ( z^k, x^k, y^k) \right\}_{k=0}^\infty$ be the sequence generated by \eqref{xf1}-\eqref{zz}. For every $K \in \mathbb{N}$, define
\[
x_{\mathrm{av}}^K = \frac{1}{K} \sum_{k=0}^{K-1}x^{k+1},~ y_{\mathrm{av}}^K = \frac{1}{K} \sum_{k=0}^{K-1}y^{k+1}.
\]
Then the following hold:
 $\left\|x_{\mathrm{av}}^K - {y}_{\mathrm{av}}^K\right\| = O\left(\frac{1}{K}\right)$ and  $f({x}_{\mathrm{av}}^K) + g(y_{\mathrm{av}}^K)  + h(x_{\mathrm{av}}^K) - F(x^*)= O\left(\frac{1}{K}\right)$.
\end{theorem}
\begin{proof}
{\rm For any \( K \in \mathbb{N} \) with \( K \geq 1 \), define the averages
\[
x_{\mathrm{av}}^K := \frac{1}{K} \sum_{k=0}^{K-1} x^{k+1}, \quad
y_{\mathrm{av}}^K := \frac{1}{K} \sum_{k=0}^{K-1} y^{k+1}.
\]
Then, it holds that
\begin{equation*}\label{ere1}
\begin{aligned}
\left\| x_{\mathrm{av}}^K - y_{\mathrm{av}}^K \right\|
&= \frac{1}{K} \left\| \sum_{k=0}^{K-1} (x^{k+1} - y^{k+1}) \right\| 
= \frac{1}{\lambda K} \left\| \sum_{k=0}^{K-1} (z^k - z^{k+1}) \right\| 
= \frac{1}{\lambda K}  \left\| z^K - z^0 \right\|.
\end{aligned}
\end{equation*}
From Theorem~\ref{ct1}, we know that the sequence \( \{ z^k \}_{k=0}^\infty \) is bounded. Thus,
\begin{equation*}\label{cr}
\left\| x_{\mathrm{av}}^K - y_{\mathrm{av}}^K \right\|
\leq \frac{\sup_{K \in \mathbb{N}} \{ \| z^K - z^0 \| \}}{\lambda K}.
\end{equation*}
Summing both sides of \eqref{efr2} from \( k = 0 \) to \( K - 1 \) yields
\begin{equation*}\label{ef}
\begin{aligned}
\sum_{k=0}^{K-1} \left[ f(x^{k+1}) + g(y^{k+1}) + h(x^{k+1}) \right]
\leq &~ K F(x^*) + \frac{1}{2\gamma\lambda} \| z^0 - z^* \|^2 + \frac{1}{2\beta} \sum_{k=0}^{K-1} \| x^{k+1} - y^k \|^2 \\
&+ \frac{1}{\gamma} \sum_{k=0}^{K-1} \langle z^* - x^*, x^{k+1} - y^{k+1} \rangle.
\end{aligned}
\end{equation*}
From Theorem~\ref{ct1}, we have 
\(\sum_{k=1}^{\infty} \| x^{k} - y^{k} \|^2 < \infty\) and 
\(\sum_{k=1}^{\infty} \| x^{k+1} - x^{k} \|^2 < \infty\),  
which implies 
\(\sum_{k=1}^{\infty} \| x^{k+1} - y^{k} \|^2 < \infty\).  
Applying Jensen’s inequality yields
\begin{equation}\label{ef1}
\begin{aligned}
f(x_{\mathrm{av}}^K) + g(y_{\mathrm{av}}^K) + h(x_{\mathrm{av}}^K) - F(x^*)
\leq &~ \frac{1}{2\gamma\lambda K} \| z^0 - z^* \|^2 
+ \frac{1}{2\beta K} \sum_{k=0}^{\infty} \| x^{k+1} - y^k \|^2 \\
&+ \frac{1}{\gamma} \langle z^* - x^*, x_{\mathrm{av}}^K - y_{\mathrm{av}}^K \rangle.
\end{aligned}
\end{equation}
By the Cauchy–Schwarz inequality, it follows that
\begin{equation}\label{ef2}
\begin{aligned}
\langle z^* - x^*, x_{\mathrm{av}}^K - y_{\mathrm{av}}^K \rangle
&\leq \| x^* - z^* \| \, \| x_{\mathrm{av}}^K - y_{\mathrm{av}}^K \| \\
&\leq \frac{\| x^* - z^* \| \, \sup_{K \in \mathbb{N}} \{ \| z^K - z^0 \| \}}{\lambda K}.
\end{aligned}
\end{equation}
Substituting  \eqref{ef2} into \eqref{ef1} gives
\begin{equation}\label{ef7}
\begin{aligned}
f(x_{\mathrm{av}}^K) + g(y_{\mathrm{av}}^K) + h(x_{\mathrm{av}}^K) - F(x^*)
\leq &~ \frac{1}{2\gamma\lambda K} \| z^0 - z^* \|^2
+ \frac{1}{2\beta K} \sum_{k=0}^{\infty} \| x^{k+1} - y^k \|^2 \\
&+ \frac{\| x^* - z^* \| \, \sup_{K \in \mathbb{N}} \{ \| z^K - z^0 \| \}}{\gamma\lambda K}.
\end{aligned}
\end{equation}
Setting \( x = x_{\mathrm{av}}^K \) and \( y = y_{\mathrm{av}}^K \) in \eqref{efr1}, and applying the Cauchy–Schwarz inequality, we obtain
\begin{equation}\label{ef10}
\begin{aligned}
f(x_{\mathrm{av}}^K) + g(y_{\mathrm{av}}^K) + h(x_{\mathrm{av}}^K) - F(x^*)
\geq &~ -\frac{1}{\gamma} \| x^* - z^* \| \, \| x_{\mathrm{av}}^K - y_{\mathrm{av}}^K \| \\
\geq &~ -\frac{\| x^* - z^* \| \, \sup_{K \in \mathbb{N}} \{ \| z^K - z^0 \| \}}{\gamma\lambda K}.
\end{aligned}
\end{equation}
Therefore, by combining \eqref{ef7} and \eqref{ef10}, we conclude that
\[
f(x_{\mathrm{av}}^K) + g(y_{\mathrm{av}}^K) + h(x_{\mathrm{av}}^K) - F(x^*) = O\!\left( \frac{1}{K} \right).
\]
This completes the proof.}
\end{proof}
\begin{remark}
  As Algorithm~\ref{AA} encompasses the RFB and FRB methods as special cases, the derived convergence rate results consequently establish the corresponding rates for these algorithms when applied to convex optimization problems.
\end{remark}
\section{Numerical tests}\label{s4}
This section evaluates Algorithm~\ref{AA} by applying it to the portfolio optimization problem and the soft-margin support vector machine (SVM) problem. The primary goal of these numerical experiments is to validate the theoretical properties established in the preceding sections,  particularly the convergence behavior of the proposed algorithm.  To ensure fairness in the experiments, the parameters of each algorithm are set as large as possible while ensuring convergence.  All numerical experiments were implemented in MATLAB R2024a on a desktop computer equipped with an Intel Core i5-10210U processor running at 1.60 GHz and 8 GB of RAM.
\subsection{Portfolio optimization}
In this subsection, we evaluate the performance of Algorithm~\ref{AA} on the classical Markowitz portfolio optimization problem:
\begin{equation}\label{po}
\min_{x\in\mathbb{R}^n} h(x) = \frac{1}{2}x^\top Qx \quad \text{s.t.} \quad m^\top x \geq r,\quad \sum_{i=1}^n x_i = 1,\quad x_i \geq 0,
\end{equation}
where $Q\succeq 0$ is the covariance matrix, $m\in\mathbb{R}^n_+$ is the vector of expected returns, and $r>0$ is the target return. Problem~\eqref{po} can be reformulated as the sum of three functions:
\begin{equation}\label{mvo1}
\min_{x\in\mathbb{R}^n} ~h(x) + I_S(x) + I_D(x),
\end{equation}
where $S = \{x\in\mathbb{R}^n \mid x \ge 0,\; \sum_{i=1}^n x_i = 1\}$ denotes the simplex, and $D = \{x\in\mathbb{R}^n \mid m^\top x \ge r\}$ is a halfspace. In this formulation, we set $A = \partial I_S$, $B = \nabla h$, and $C = \partial I_D$. The gradient $\nabla h$ is Lipschitz continuous with constant $L = \|Q\|$, and the resolvent operators correspond to projections onto the respective sets: $J_{\gamma A} = P_S$ and $J_{\gamma C} = P_D$.

To simulate, we follow exactly the same data generation procedure and initial point settings as in~\cite{johnstone2021single}. The dimension is set to $n = 10000$. The matrix $Q$ is constructed as $Q = \frac{1}{n} Q_0 Q_0^\top$, where the entries of $Q_0$ are independently drawn from the standard normal distribution. The return vector $m$ has entries independently drawn from the uniform distribution on $[0, 100]$. The target return is set to $r = \delta_r \cdot \frac{1}{n}\sum_{i=1}^n m_i$ with $\delta_r \in \{0.5, 0.8, 1.0, 1.5\}$. The initial point is $x^0 = [1/n, \ldots, 1/n]^\top$.

We compare Algorithm~\ref{AA} against the Davis--Yin three-operator splitting (DYS) method~\eqref{0} and the backward-forward-reflected-backward (BFRB) splitting method~\cite{rieger2020backward}, as all three algorithms have comparable computational complexity per iteration. To ensure a fair comparison, the parameters of each algorithm are set as large as possible while still guaranteeing convergence. The stopping criterion is based on the relative residual:
\begin{equation}
\frac{\|y^k - x^k\|^2}{\|x^k\|^2 + 1} \leq 10^{-10}.
\end{equation}
To comprehensively evaluate the performance of the three algorithms, we consider the following merit function inspired by~\cite{johnstone2021single}, which simultaneously captures the objective value and constraint violations:
\begin{equation}
F(x) = h(x) - \min(0, m^\top x - r) + \left|\sum_{i=1}^n x_i - 1\right| - \min\left(0, \min_{i\in\{1,\dots,n\}} x_i\right).
\end{equation}

We present the numerical results for the case $\delta_r = 1.5$ in Figure~\ref{fig:portfolio}. The left panel shows the relative residual versus iteration count, while the right panel plots the merit function value $F(x^k)$ over iterations. 
\begin{figure}[tb]
\centering
\begin{minipage}{0.4\linewidth}
  \includegraphics[scale=0.45]{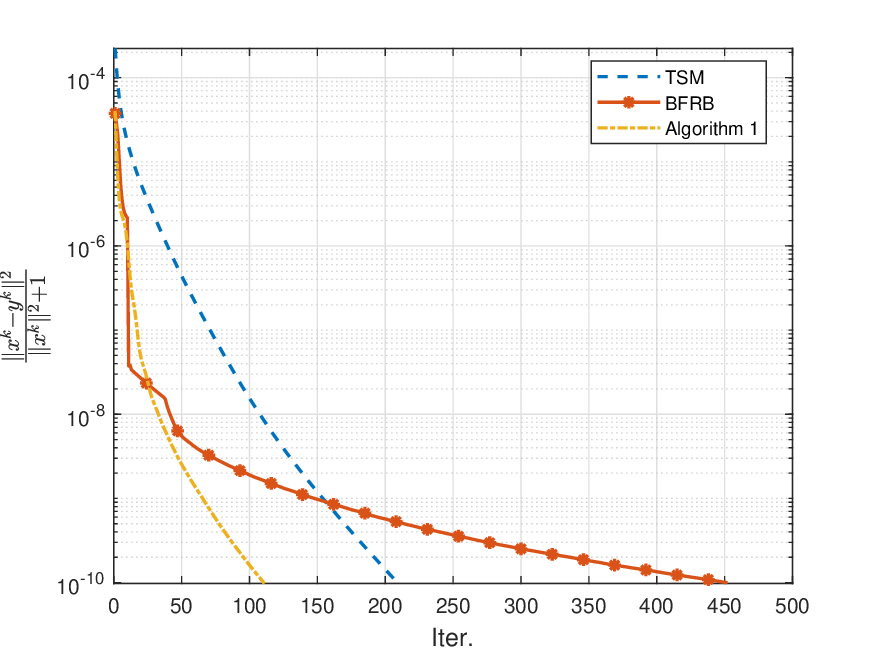}
\end{minipage}
\begin{minipage}{0.4\linewidth}
  \includegraphics[scale=0.45]{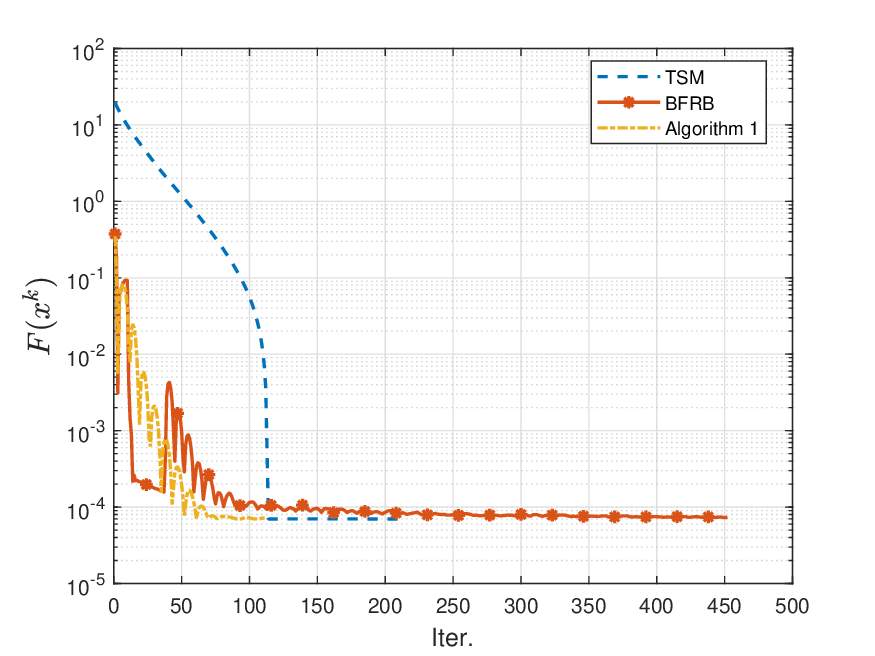}
\end{minipage}
\caption{Comparison of the numerical performance of the Davis--Yin three-operator splitting method, the backward-forward-reflected-backward (BFRB) splitting method, and Algorithm~\ref{AA} when solving problem~\eqref{po} with $\delta_r = 1.5$.}
\label{fig:portfolio}
\end{figure}

As can be observed from Figure~\ref{fig:portfolio}, both algorithms effectively solve problem~\eqref{po}. Although they have comparable per-iteration computational costs, Algorithm~\ref{AA} requires significantly fewer iterations to reach the stopping criterion. Upon termination, the final merit function values achieved by both algorithms are nearly identical, indicating that both methods yield solutions of comparable quality. Detailed numerical results for all values of $\delta_r$ are summarized in Table~\ref{tab:portfolio}, including the number of iterations, running time (in seconds), and the final merit function value. This empirical evidence supports the theoretical convergence properties established in the preceding sections.
\begin{table}[h]
\caption{Numerical results for solving problem~\eqref{po}.}\label{tab:portfolio}
\begin{center}
\begin{tabular}{cccccccccc}
\hline
\multirow{2}{*}{$\delta_r$} & \multicolumn{3}{c}{TSM\eqref{0}} & \multicolumn{3}{c}{BFRB\eqref{0}} & \multicolumn{3}{c}{Algorithm~\ref{AA}} \\
\cmidrule(l{0.2cm}r{0.2cm}){2-4} \cmidrule(l{0.2cm}r{0cm}){5-7}\cmidrule(l{0.2cm}r{0cm}){8-10}
& Time & Iter. & $F(x^k)$ & Time & Iter. & $F(x^k)$& Time & Iter. & $F(x^k)$ \\
\hline
$0.5$ & 12.19 & 353 & $2.31\times10^{-5}$ & 15.02 & 439 & ${2.77\times10^{-5}}$ & \textbf{6.16} & \textbf{180} & $\pmb{2.21\times10^{-5}}$ \\
$0.8$ & 12.28 & 343 & $\pmb{2.15\times10^{-5}}$ &16.20 &  452  & ${2.71\times10^{-5}}$ & \textbf{6.19} & \textbf{172} & ${2.32\times10^{-5}}$ \\
$1.0$ & 12.01 & 358 & ${2.89\times10^{-5}}$ &  {15.71} & {469} & $2.62\times10^{-5}$ & \textbf{6.21} & \textbf{180} & $\pmb{2.59\times10^{-5}}$ \\
$1.5$ & 11.08 & 215 & ${8.19\times10^{-5}}$ &  {23.85} & {471} & $8.59\times10^{-5}$ & \textbf{6.25} & \textbf{119} & $\pmb{8.16\times10^{-5}}$ \\
\hline
\end{tabular}
\end{center}
\end{table}

\subsection{Soft-margin support vector machine problem}
We consider the soft-margin support vector machine (SVM) problem, which can be formulated as
\begin{equation}\label{svm0}
\min_{w \in \mathbb{R}^n,\, b \in \mathbb{R}} 
\frac{1}{2c}\|w\|^2 + \sum_{i=1}^{m} \max\{1 - y_i(x_i^\top w + b),\, 0\},
\end{equation}
where $\{x_i\}_{i=1}^m \subset \mathbb{R}^n$ denote the input vectors and $\{y_i\}_{i=1}^m \subset \{-1,1\}$ are the corresponding output labels. 
Support vector machines were originally proposed by Cortes and Vapnik~\cite{1995Support} and have since been extensively used in machine learning, statistics, and pattern recognition. 
In problem~\eqref{svm0}, the term $\tfrac{1}{\|w\|}$ represents the margin between the classifier $w^\top x + b = 1$ and the feature space. Generally, the larger  $\frac{1}{\|w\|}$ is, the more robust the classifier becomes. By defining $a_i = (x_i^\top,1)^\top$, $v = (w^\top,b)^\top$, and $f(v) = \frac{1}{2c}\|w\|^2$, problem~\eqref{svm0} can be equivalently rewritten as
\begin{equation}\label{svm}
\min_{v \in \mathbb{R}^{n+1}} f(v) + \sum_{i=1}^{m} g_i(v),
\end{equation}
where $g_i(v) = \max\{1 - y_i a_i^\top v,\, 0\}$. It is straightforward to verify that $\nabla f$ is Lipschitz continuous with constant $\frac{1}{c}$.

Following the space reformulation technique proposed by Brice\~{n}o--Arias~\cite{2015Forward}, problem~\eqref{svm} can be reformulated as the following three-operator inclusion problem:
\begin{equation}\label{up1}
\begin{aligned}
0 &\in 
\left(A_1 \times A_2 \times \cdots \times A_m\right)(\boldsymbol{v}) 
+ \left(\frac{1}{m}Q \times \cdots \times \frac{1}{m}Q\right)(\boldsymbol{v}) 
+ \mathcal{N}_{V_m}(\boldsymbol{v}) \\
&:= \boldsymbol{A}(\boldsymbol{v}) + \boldsymbol{B}(\boldsymbol{v}) + \boldsymbol{C}(\boldsymbol{v}),
\end{aligned}
\end{equation}
where $A_i = \partial g_i$, $Q = \nabla f$, $V_m = \{(v_1, v_2, \ldots, v_m) \mid v_1 = v_2 = \cdots = v_m \in \mathbb{R}^{n+1}\}$, and $\mathcal{N}_{V_m}$ denotes the normal cone operator of the subspace $V_m$. We apply both the Davis–Yin three-operator splitting method and Algorithm~\ref{AA} to solve problem~\eqref{up1} and compare their numerical performance. 
In this setting, it can be verified that $\boldsymbol{B}$ is $mc$-cocoercive, and the Davis–Yin method coincides with the generalized forward–backward method~\cite{raguet2019note,2011Generalized}. 
The proximal operator of $g_i$ admits the following closed-form expression:
\[
{\rm prox}_{\gamma g_i}(u)
= u - \frac{y_i}{\|a_i\|^2} \max\big\{ \min\{y_i a_i^\top u - 1,\, 0\},\, -\|a_i\|^2 \gamma \big\} a_i.
\]

In the experiments, the parameters of each algorithm are set as large as possible while ensuring convergence. We test two widely used benchmark datasets, namely \textit{heart} and \textit{COD-RNA}, obtained from the LIBSVM repository~\cite{CC01a}. 
The hyperparameter in problem~\eqref{svm} is set to $c = 20$. 
For each dataset, $80\%$ of the samples are used for training, while the remaining $20\%$ are reserved for testing the classifier accuracy. 
Let $\{(x_i, y_i)\}_{i=1}^{t}$ denote the testing samples. 
The testing accuracy is computed as
\[
{\rm ACC}_k := 1 - \frac{1}{2t} \sum_{i=1}^{t} \Big|{\rm sign}\big(\langle w^k, x_i \rangle + b^k\big) - y_i \Big|.
\]
Figure~\ref{fig5} illustrates the testing accuracy of the classifier across iterations for both methods.
\begin{figure}[tb]
\centering
\begin{minipage}{0.4\linewidth}
  \includegraphics[scale=0.45]{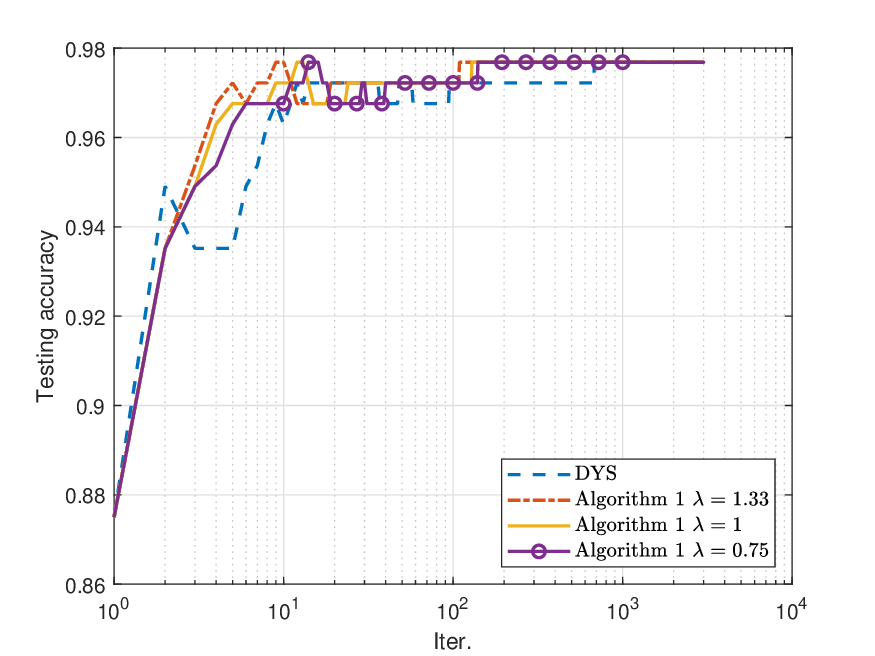}
\end{minipage}
\begin{minipage}{0.4\linewidth}
  \includegraphics[scale=0.45]{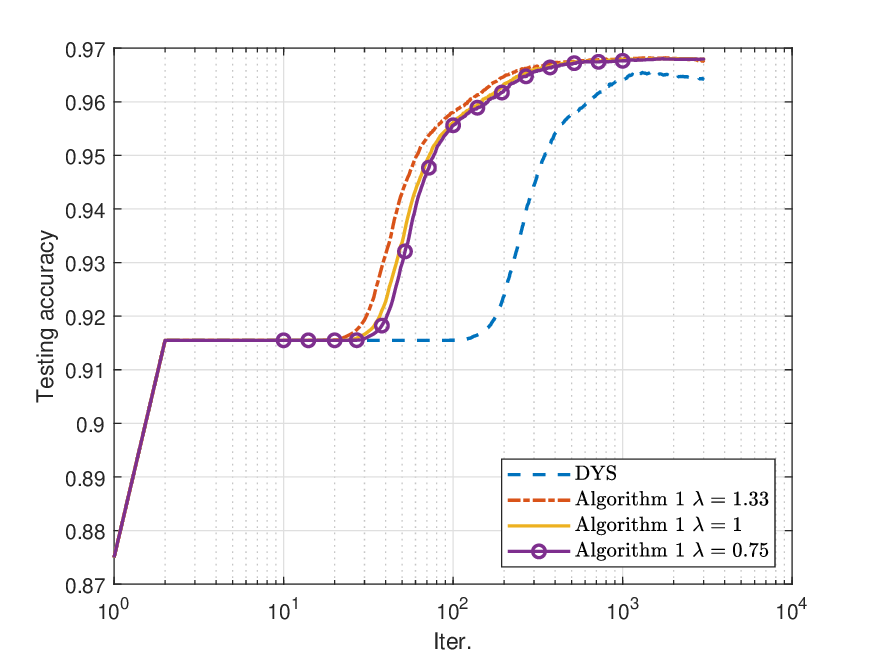}
\end{minipage}
\caption{Comparison of the numerical performance of the Davis–Yin three-operator splitting method and Algorithm~\ref{AA} for solving problem~(\ref{svm}) on the two data sets: \textit{heart} (left) and \textit{COD-RNA} (right).}
\label{fig5}
\end{figure}
Based on Figure~\ref{fig5}, both the Davis–Yin three-operator splitting method (DYS) and Algorithm~\ref{AA} can effectively solve the SVM problem and achieve comparable classification accuracy. 
For the smaller-scale dataset \textit{heart} ($m=216$), DYS exhibits higher computational efficiency, whereas for the larger-scale dataset \textit{COD-RNA} ($m=47628$), Algorithm~\ref{AA} demonstrates superior efficiency compared with DYS. In Table \ref{tab1}, we report more detailed numerical results. Note that when the relaxation coefficient $\lambda = \tfrac{4}{3}$, the upper bound of the stepsize $\gamma$ reaches its maximum value. Moreover, when the relaxation coefficient is chosen close to $\tfrac{4}{3}$, the computational efficiency of the algorithm can be improved to some extent.
\begin{table}[h]\caption{Numerical results for solving problem (\ref{svm}).}\label{tab1}
\begin{center}
\begin{threeparttable}
\begin{tabular}{ccccccc}
\hline
\centering
{\multirow{2}{*}{{Algorithm}}}&\multicolumn{3}{c}{\textit{heart} ($m=216$)}&\multicolumn{3}{c}{\textit{COD-RNA} ($m=47628$) }\\
     \cmidrule(l{0.2cm}r{0.2cm}){2-4}
     \cmidrule(l{0.2cm}r{0cm}){5-7}
&Time(s)&Accuracy&Margin&Time(s)&Accuracy&Margin\\
\hline
DYS&  18.05&  \pmb{97.69}\%&   16.12 &   284.92  &   96.42\%&  730.70 \\
Algorithm \ref{AA} $\lambda=0.75$&  17.35&  97.69\% &   16.24 &   200.22 &  96.77\%&748.50\\
Algorithm \ref{AA} $\lambda=1$&  17.21&  97.69\%&  16.24 &   196.06 &  96.80\%&  753.37\\
Algorithm \ref{AA} $\lambda=1.33$& \pmb{17.11}&  \pmb{97.69}\%&  \pmb{16.24} &   \pmb{182.85} & \pmb{ 96.80}\%&  \pmb{755.50}\\
\hline
\end{tabular}
\end{threeparttable}
\end{center}
\end{table}
\section{Discussions and conclusions}\label{s5}
In this paper, we introduce a new three-operator splitting method for solving the zero-finding problem for the sum of two maximal monotone operators and one cocoercive operator. Unlike the Davis--Yin three-operator splitting method, the proposed method generalizes the reflected forward-backward, Douglas--Rachford, and forward-reflected-backward splitting methods. Under mild assumptions, we prove global convergence of the proposed method. Since the resolvents \(J_{\gamma A}\) and \(J_{\gamma C}\) are generally nonlinear for maximal monotone operators \(A\) and \(C\), the Lyapunov function in \eqref{lf} differs from those used for RFB and FRB. Consequently, the admissible stepsize bound for Algorithm~\ref{AA} is slightly more restrictive than those for RFB and FRB (indeed, \(\tfrac{2\beta}{5} < \tfrac{\beta}{2}\)). This observation raises a natural question: can one devise an alternative Lyapunov (or evaluation) function different from \eqref{lf} that permits larger stepsizes? Moreover, the convergence rate established in this paper is sublinear. If certain operators possess strong monotonicity properties, can the algorithm achieve a linear convergence rate?

{\bibliographystyle{siam}}
{\bibliography{sn-bibliography}}
\end{document}